\numberwithin{equation}{section}
\newtheorem{theorem}[equation]{Theorem}
\newtheorem{proposition}[equation]{Proposition}
\newtheorem{lemma}[equation]{Lemma}
\theoremstyle{definition}
\newtheorem{definition}[equation]{Definition}
\DeclareMathOperator{\Diff}{Diff}
\DeclareMathOperator{\End}{End}
\DeclareMathOperator{\spec}{spec}
\DeclareMathOperator{\supp}{supp}
\DeclareMathOperator{\sym}{ \sigma\!\!\!\sigma}
\def\C{\mathbb C}
\def\R{\mathbb R}
\def\m{\mathfrak m}
\def\K{\mathcal K}
\def\scrH{\mathscr H}
\def\L{\mathscr L}
\def\M{\mathcal M}
\def\N{\mathcal N}
\def\Y{\mathcal Y}
\def\Z{\mathcal Z}
\def\agen{\mathfrak a}
\def\ext{\mathfrak e}
\def\Ext{\mathfrak E}
\def\gen{\mathfrak g}
\def\trb{\mathscr T}
\def\preP{\mathfrak P}
\def\Dom{\mathcal D}
\def\Ring{\mathcal R}
\def\minus{\backslash}
\def\im{i}
\def\Id{I}
\def\open#1{\smash[t]{\overset{{}_{\,\,\circ}}{#1}{}}}
\def\set#1{\left\{#1\right\}}
\def\rpar{)}
\def\lbra{[}
\def\ev{\mathrm{ev}}
\def\wev{\,{}^w\hspace{-0.5pt}{\ev}}
\def\wT{\,{}^w\hspace{-0.5pt}T}
\def\wpi{\,{}^w\!\pi}
\def\wsym{\,{}^w\!\!\sym}
\def\wg{\,{}^w\!g}
\def\bP{\,{}^b\!P}
\def\bPhat{\,{}^b\!\widehat P}
\def\bA{\,{}^b\!A}
\def\loc{\mathrm{loc}}
\def\sc{\mathrm{sc}}
\def\ft{\!\widehat{\phantom{m}}}
\begin{document}

\title[The Friedrichs extension]{The Friedrichs extension for elliptic wedge operators of second order}
\thanks{Work partially supported by the National Science Foundation, Grants DMS-0901202 and DMS-0901173. The authors are grateful to \emph{Humboldt-Universit{\"a}t zu Berlin} and the \emph{SFB 647: Space--Time--Matter. Analytic and Geometric Structures} for the hospitality and support during September 2013.}

\author{Thomas Krainer}
\address{Penn State Altoona\\ 3000 Ivyside Park \\ Altoona, PA 16601-3760}
\email{krainer@psu.edu}
\author{Gerardo A. Mendoza}
\address{Department of Mathematics\\ Temple University\\ Philadelphia, PA 19122}
\email{gmendoza@temple.edu}

\begin{abstract}
Let $\M$ be a smooth compact manifold whose boundary is the total space of a fibration $\N\to \Y$ with compact fibers, let $E\to\M$ be a vector bundle. Let 
\begin{equation*}\tag{\dag}
A:C_c^\infty(\open \M;E)\subset x^{-\nu} L^2_b(\M;E)\to x^{-\nu} L^2_b(\M;E)
\end{equation*}
be a second order elliptic semibounded wedge operator. Under certain mild natural conditions on the indicial and normal families of $A$, the trace bundle of $A$ relative to $\nu$ splits as a direct sum $\trb=\trb_F\oplus\trb_{aF}$ and there is a natural map $\preP:C^\infty(\Y;\trb_F)\to C^\infty(\open\M;E)$ such that $C^\infty_{\trb_F}(\M;E)=\preP(C^\infty(\Y;\trb_F))+\dot C^\infty(\M;E)\subset \Dom_{\max}(A)$. It is shown that the closure of $A$ when given the domain  $C^\infty_{\trb_F}(\M;E)$ is the Friedrichs extension of (\dag) and that this extension is a Fredholm operator with compact resolvent. Also given are theorems pertaining the structure of the domain of the extension which completely characterize the regularity of its elements at the boundary.
\end{abstract}

\subjclass[2010]{Primary: 58J32; Secondary: 58J05, 35J47, 35J57}
\keywords{Manifolds with edge singularities, elliptic operators, boundary value problems}

\maketitle


\section{Introduction}

The purpose of this paper is to give an explicit description of the domain of the Friedrichs extension of a second order semibounded elliptic wedge operator initially defined on smooth functions or sections with compact support away from the boundary.

Generally, the issue of characterizing the domain of the Friedrichs extension only arises in the presence of singularities, either in the form of singular coefficients of a partial differential operator, for instance Schr{\"o}dinger operators with singular (magnetic) potentials, or, for geometric operators, due to the incompleteness of the underlying Riemannian geometry. Indeed, in a seminal paper Gaffney \cite{Gaffney} proved the essential selfadjointness of the Laplacian on complete Riemannian manifolds, a result extended by Shubin \cite{Shub92} and Kordyukov \cite{Ko91} to wide classes of uniformly elliptic operators in $L^2$ on complete manifolds with bounded geometry. Shubin and Kordyukov showed that uniformly elliptic operators are essentially closed, i.e., $\Dom_{\min} = \Dom_{\max}$, where that domain is independent of the operator and in fact is a Sobolev space determined by the geometric data of the manifold.

Since Cheeger's papers \cite{Ch79,Ch83} it has been understood that a relevant category of singular partial differential equation problems consists of those that give rise to iterated wedge (or iterated incomplete edge \cite{AlbinLeichtMazzPiazza12}) operators on stratified spaces. The simplest instances are wedge operators on manifolds with smooth singular strata (singularities of edge type); these specialize further to cone operators if the singular strata consist only of points (conical singularities). The analysis of operators on a general stratified space is in principle amenable to inductive arguments once the principles in the case of conical and edge singularities are understood.

In the case of manifolds with conical singularities, the domain of the Friedrichs extension of semibounded elliptic cone operators was characterized in full generality in \cite{GiMe01}; previous investigations include \cite{BrSee1,Ch79,Le97}. These works go further in that they also provide a structural understanding of the maximal domain of an elliptic cone operator, initially given as
$$
A : C_c^{\infty} \subset x^{-\nu}L^2_b \to x^{-\nu}L^2_b,
$$
in terms of an explicit description of the minimal domain and a complementary space. We paraphrase this in the form that there exists an exact sequence
\begin{equation*}
0\to \Dom_{\min} \xrightarrow{\iota} \Dom_{\max} \xrightarrow{\gamma} {\mathscr S} \to 0,
\end{equation*}
where ${\mathscr S}$ is a \emph{specific} finite-dimensional space of singular functions associated to the indicial roots of $A$ encoding the asymptotic behavior of the elements in the maximal domain at the boundary modulo remainders in the minimal domain. $\Dom_{\min}$ is described in terms of weighted $b$-Sobolev spaces (\cite{Mel93}), for instance $\Dom_{\min} = x^{-\nu+2}H^2_b$ if $A$ is of second order and the line $\{\sigma: \Im(\sigma) = \nu-2\}$ is free of indicial roots. The map $\gamma$ provides a concrete realization of the abstract Cauchy data map $\Dom_{\max} \to \Dom_{\max}/\Dom_{\min}$ in terms of the space ${\mathscr S}$. For semibounded operators $A$, it is made precise in \cite{GiMe01} that ${\mathscr S} = {\mathscr S}_F \oplus {\mathscr S}_{aF}$ canonically, where ${\mathscr S}_{aF}$ consists of the `most singular half' of ${\mathscr S}$, and the domain of the Friedrichs extension, $\Dom_F$, is given by
$$
\Dom_F = \{u \in \Dom_{\max} : \gamma(u) \in {\mathscr S}_F\}.
$$
Equivalently, we find that $u$ is in $\Dom_F$ if and only if $u$ satisfies the boundary condition that the component of $\gamma(u)$ in ${\mathscr S}_{aF}$ vanishes, and we get an exact sequence
\begin{equation}\label{ResF}
0\to \Dom_{\min} \xrightarrow{\iota} \Dom_{F} \xrightarrow{\gamma} {\mathscr S}_F \to 0.
\end{equation}
This explicit formulation provides a complete structural understanding of $\Dom_F$. 

The boundary condition that is entailed by membership in the Friedrichs domain can be phrased in several different ways without explicit reference to the space ${\mathscr S}$, for example as a condition on the blow-up rate of $u$ at the boundary (as is done in \cite[Theorem~6.1]{BrSee1}). However, without \eqref{ResF}, the boundary condition and asymptotic behavior that is imposed on $u$ by membership in $\Dom_F$ remain elusive.

The case of edge singularities considered in the present paper is significantly more complicated for reasons that pertain to regularity. The abstract Cauchy data space $\Dom_{\max}/\Dom_{\min}$ of an elliptic wedge operator $A$ is in general infinite dimensional, and comprehending its structure is made complicated first and foremost by the fact that the indicial roots of $A$, parametrized by the edge $\Y$, generally vary with arbitrary branchings. This imposes the need to measure variable and anisotropic Sobolev smoothness of functions with respect to the edge variables in the analysis of regularity at the boundary.

One of the central results of this paper is a full structural resolution of the Friedrichs domain of a semibounded elliptic wedge operator $A$ of second order, under some mild assumptions on the symbols, generalizing \eqref{ResF} to the situation at hand. The space ${\mathscr S}_F$ is shown to be a Sobolev space of sections of varying regularity of a certain subbundle $\trb_F$ of the trace bundle $\trb \to \Y$ that is associated to $A$ (\cite{KrMe12a,KrMe12b}), while $\Dom_{\min} = x^{-\nu+2}H^2_e$ (cf. \cite{GiKrMe10,Mazz91}).

Previous work on describing the structure of the Friedrichs domain for wedge operators includes \cite{BruSee91,MazzVert12,MelVasWunEdge} and has centered on the boundary condition that is imposed on a function by membership in $\Dom_F$ for certain special operators of second order. In \cite[Theorem~5.1]{BruSee91} the condition is phrased in terms of a control of the blow-up rate at the boundary along with conditions pertaining to the derivatives. \cite[Section~5]{MelVasWunEdge} considers the Friedrichs \emph{form domain} for the scalar Laplacian and describes it in terms of edge Sobolev spaces. \cite[Proposition 2.5]{MazzVert12} treats the full Hodge Laplacian under the additional assumption that the indicial roots are constant, and identifies the Friedrichs boundary condition in terms of vanishing of certain terms in a weak asymptotic expansion at the boundary which has previously been shown to exist in \cite{Mazz91}. The argument in both \cite{MelVasWunEdge} and \cite{MazzVert12} is aided by the fact that the operator is of the form $A = B^{\star}B$ for a first order operator $B$. A structural resolution \eqref{ResF} of the Friedrichs domain is not addressed in these papers.

Other related work pertaining to the analysis of regularity at the boundary for solutions to elliptic equations on manifolds with edges and more general stratified manifolds includes \cite{AmLaNi07,CostabelDauge,KrMe13,MazRossmann,Mazz91,MazzVert14,Mel93,SchuNH,SchulzeIterated,SchVol}.


\section{Overview}\label{sec-Overview}

We work on a smooth compact manifold $\M$ with boundary $\N$, where the latter is the total space of a fibration $\wp:\N\to\Y$ over some other manifold $\Y$ with compact fibers. This configuration results from blowing up a stratified manifold with a single singular stratum $\Y$, along the singularity. The manifold $\Y$ need not be connected, but we will assume it is for the sake of notational simplicity. Associated with this configuration and a vector bundle $E\to\M$ is the ring of edge differential operators, $\Diff^*_e(\M;E)$; these are linear differential operators with smooth coefficients up to the boundary, which along the latter differentiate only in directions tangent to the fibers. For all of the above and more see Mazzeo \cite{Mazz91}. 

The elements of $x^{-m}\Diff^m_e(\M;E)$ are wedge operators; $x$ is a defining function for $\N$, positive in $\open\M$, which we fix from now on. Associated to such a wedge operator $A=x^{-m}P$ there are three symbols: the wedge-principal symbol $\wsym(A)$, the indicial family $\bPhat$, and the normal family $A_{\wedge}$. The first is essentially the standard principal symbol of $A$ over the interior, the other two are objects  over the boundary. For an in-depth description of the differential topological setup in relation with wedge operators, including invariant definitions of the various symbols in the $w$ context, see \cite{GiKrMe10}. Some details will be given in the next section.

Recall (\cite{Kato}) that the Friedrichs extension of a symmetric, densely defined operator $A : \Dom \subset H \to H$ acting in a Hilbert space $H$ that is semibounded from below, where without loss of generality $A \geq 1$, is the selfadjoint operator $A_F : \Dom_F \subset H \to H$ given by $A_F = A^{\star}$ with domain $\Dom_F = \Dom(A^{\star})\cap\scrH$, where $\scrH \subset H$ is the completion of $\Dom$ with respect to the inner product $[u,v] = \langle Au,v \rangle$.

Let $\m$ be a smooth positive density on $\M$ and $\m_b=x^{-1}\m$. Suppose $E$ is given a Hermitian metric, so the spaces $x^{-\nu}L^2_b(\M;E)$ of $L^2$ sections of $E$ with respect to $x^{2\nu}\m_b$ are defined. 

Let $A\in x^{-2}\Diff^2_e(\M;E)$ be given. The basic assumptions we make on $A$ are  ellipticity (invertibility of $\wsym(A)$), symmetry, and semiboundedness, the latter two being  properties of $A$ as an operator
\begin{equation}\label{InitialOperatorDag}\tag{\dag}
A:C_c^\infty(\open \M;E)\subset x^{-\nu} L^2_b(\M;E)\to x^{-\nu} L^2_b(\M;E).
\end{equation}
Additionally we place conditions on the boundary symbols, briefly described in the next paragraph (see Section~\ref{sec-SetupAssumptions} for details). The task is to give a description of the domain of the Friedrichs extension of \eqref{InitialOperatorDag} as explicitly as possible. For symmetry reasons it is more convenient to work with $x^{-\nu+1}Ax^{\nu-1}$, so we replace $A$ by the latter and consider
\begin{equation}\label{InitialOperator}
A:C_c^\infty(\open \M;E)\subset x^{-1} L^2_b(\M;E)\to x^{-1} L^2_b(\M;E)
\end{equation}
instead of \eqref{InitialOperatorDag}; this does not affect the hypotheses originally made on $A$.

The indicial family of $A$ is the indicial family of $P=x^2A$. This is a family $\bPhat(y,\sigma)\in \Diff^2(\Z_y;E_{\Z_y})$ of elliptic operators parametrized by $(y,\sigma)\in \Y\times\C$, depending polynomially on $\sigma$, of order $2$; here $\Z_y=\wp^{-1}(y)$. Because of the $w$-ellipticity of $A$, for each $y\in \Y$ the differential operator $\bPhat(y,\sigma)$ fails to be invertible for $\sigma$ in a set $\spec_{b,y}(A)$ whose intersection with every strip $|\Im\sigma|<r$ is finite. The condition we place on $\bPhat$ is that
\begin{equation*}
\spec_e(A)=\set{(y,\sigma)\in \Y\times\C:\sigma \in \spec_{b,y}(A)}
\end{equation*}
shall not contain points $(y,\sigma)$ with $\sigma$ on one of the three lines $\Im\sigma=1$, $0$, $-1$ (the pertinent lines for \eqref{InitialOperatorDag} are  $\Im\sigma=\nu$, $\nu-1$, $\nu-2$). Let now $\pi_\N:\N^\wedge\to\N$ be the inward pointing normal bundle of $\N\subset\M$. Then $\wp_\wedge=\wp\circ\pi_\N$ is also a fibration. Let $\Z_y^\wedge=\wp_\wedge^{-1}(\Z_y)$. So $\Z_y^\wedge\approx \Z_y\times\lbra0,\infty\rpar$, and quantizing we get the operators $\bA(y,D_x)=x^{-2}\bPhat(y,xD_x)$ acting on sections of $E_{\Z_y^\wedge}$. The consequence pertaining the outermost lines $\Im\sigma=\pm1$ is that the vector spaces $\trb_y$ whose elements are the sections of $E_{\Z_y^\wedge}\to\Z_y^\wedge$ in $\ker \bA(y,D_x)$ of the form
\begin{equation}\label{TraceElement}
\hspace*{-8pt}\sum_{\sigma \in \spec_{b,y}(A)\cap\Sigma} \sum_{k=0}^{m_{\sigma}} c_{\sigma,k}\log^k(x)x^{i\sigma},\quad\Sigma=\set{\sigma:-1<\Im\sigma<1}
\end{equation}
have constant dimension as $y$ ranges throughout $\Y$, and are the fibers of a smooth vector bundle $\trb\to\Y$, the trace bundle of $A$. The coefficients $c_{\sigma,k}$ are smooth sections of $E_{\Z_y}$. For details on vector bundles associated in this fashion with holomorphic Fredholm families we direct the reader to \cite{KrMe12a}. The condition pertaining the line $\Im\sigma=0$ determines a splitting $\trb=\trb_F\oplus\trb_{aF}$ in which the elements of $\trb_{F,y}$ are those of the form \eqref{TraceElement} over $y$ with $\Sigma$ replaced by $\Sigma_F=\set{\sigma:-1<\Im\sigma<0}$, while those in $\trb_{aF}$ are defined using $\Sigma_{aF}=\set{\sigma:0<\Im\sigma<1}$ instead.

The normal family is a family parametrized by the elements of $T^*\Y$, with $A_{\wedge}(\pmb\eta)\in x^{-2}\Diff_b^2(\Z_y^\wedge;E_{\Z_y^\wedge})$ for every $\pmb\eta\in T^*_y\Y$. On this family we place the requirement that $0$ is not an eigenvalue of the Friedrichs extension of
\begin{equation*}
A_\wedge(\pmb \eta):C_c^\infty(\open \Z_y^\wedge;E_{\Z_y^\wedge})\subset x^{-1} L^2_b(\Z_y^\wedge;E_{\Z_y^\wedge})\to x^{-1} L^2_b(\Z_y^\wedge;E_{\Z_y^\wedge})
\end{equation*}
when $\pmb\eta\ne 0$. This condition implies in particular that the minimal domain of $A$ (the domain of the closure of \eqref{InitialOperator}) is $x^1H^2_e(\M;E)$ by \cite[Theorem 4.2]{GiKrMe10}, and is used in an essential manner in Section~\ref{sec-ExtensionOperator} to ensure the existence of certain inverses. The weighted edge Sobolev spaces $x^kH^s_e(\M;E)$ were defined in \cite{Mazz91}. Consistent with our previous works \cite{GiKrMe10,KrMe13} we follow here a slightly different convention pertaining to the use of weights in that we base the definition on $L^2_b$ instead, i.e., we have $H^0_e(\M;E) = L^2_b(\M;E)$.

Let $\omega\in C^\infty(\R)$ be smooth, equal to $1$ near $0$, with small support. A smooth section $\phi$ of $C^\infty(\Y;\trb_F)$ is by definition (of smoothness, see \cite{KrMe12a}) a smooth section of $\open\pi_{\N}^*E\to\open \N^\wedge$ ($\open\pi_\N$ being the restriction of $\pi_\N$ to $\open\N$). The section $\omega\phi$ can be transferred to a section of $E$ on $\M$ supported near $\N$. Write $\preP(\phi)$ for the resulting section. Theorem~\ref{BasicFriedrichsTheorem} asserts that the domain of the Friedrichs extension of \eqref{InitialOperator}, denoted $\Dom_F$, is the closure in the $A$-graph norm (based on $x^{-1}L^2_b$) of the space
\begin{equation}\label{FriedrichsCore}
C^\infty_{\trb_F}(\M;E)=\preP\big(C^\infty(\Y;\trb_F)\big)+\dot C^\infty(\M;E).
\end{equation}
Theorem~\ref{FriedrichsIsFredholm} asserts that $A : \Dom_F(A) \to x^{-1}L^2(\M;E)$ is Fredholm with compact resolvent.

The sum in \eqref{FriedrichsCore} is in fact direct, therefore gives rise to an exact sequence
\begin{equation}\label{SmoothShortExact}
0\to \dot C^\infty(\M;E) \xrightarrow{\iota}C^\infty_{\trb_F}(\M;E)\xrightarrow{\gamma}C^\infty(\Y;\trb_F)\to 0.
\end{equation}
The operator $\gamma$ is a generalization of part of the classical Cauchy data (or trace) map for the standard theory of boundary value problems for regular elliptic operators on manifolds with smooth boundary. Classically, for second order operators the Cauchy data are the pair consisting of the restrictions to the boundary of the function (or section) $u$, $\gamma_0(u)$, and its normal derivative, $\gamma_1(u)$. These are the coefficients of the linear part of the Taylor series at the boundary with respect to the defining function $x$ of a putative solution on which boundary conditions are to be placed. Assembled into the first order polynomial $\gamma_0(u)x^0+\gamma_1(u) x^1$, they are generalized by expressions of the form \eqref{TraceElement}, here with $\sigma\in \set{0, -\im}$. Classically, the Friedrichs domain corresponds to the condition $\gamma_0(u)=0$, that is, the only possible nonzero coefficient, $\gamma_1(u)$, is the one associated to the lower half of $\set{0, -\im}$.

\smallskip
The operators in \eqref{SmoothShortExact} extend to continuous operators on the closure of the various spaces appearing in the sequence, each within its appropriate Hilbert space, and yield the short exact sequence,
\begin{equation}\label{SobolevShortExact}
0\to x^1H^2_e(\M;E) \xrightarrow{\iota}\Dom_F(A)\xrightarrow{\gamma}H^{2-\gen}(\Y;\trb_F)\to 0
\end{equation}
alluded to in the introduction (see Theorem~\ref{ExactSequenceFriedrichsTheorem}). The space $H^{2-\gen}(\Y;\trb_F)$ is a Sobolev space of varying anisotropic regularity to be described in some detail in a moment. The space $x^1\!H^2_e(\M;E)$ is $\Dom_{\min}(A)$, as pointed out above. 

The exactness of the sequence \eqref{SobolevShortExact} is a regularity statement for the problem 
\begin{equation*}
Au=f\in x^{-1}L^2_b(\M;E),\quad u\in \Dom_F
\end{equation*}
since a solution $u$ {\it a fortiori} satisfies $\gamma(u)\in H^{2-\gen}(\Y;\trb_F)$; this characterizes the behavior of $u$ modulo $\Dom_{\min}(A)$.
In the classical $L^2$-theory of regular elliptic operators of second order on compact manifolds with smooth boundary, the sequence \eqref{SobolevShortExact} specializes to
\begin{equation*}
0\to H_0^2(\M;E) \xrightarrow{\iota} \Dom_F(A) \xrightarrow{\gamma_1}H^{1/2}(\partial\M;E)\to 0,
\end{equation*}
which resembles the classical regularity result that $\Dom_F(A) \subset H^2(\M;E)$, i.e.,
\begin{equation*}
\Dom_F(A) = \set{u \in H^2(\M;E) : u|_{\partial\M} = 0};
\end{equation*}
note that $\trb_F \cong E|_{\partial\M}$ canonically via $\gamma_1(u)x^1 \leftrightarrow \gamma_1(u)$, and the base Hilbert space is $L^2 = x^{-1/2}L^2_b$ as opposed to $x^{-1}L^2_b$ in this case.

We now describe $H^{2-\gen}(\Y;\trb_F)$. The vector bundle $\trb\to\Y$ comes equipped with a natural endomorphism, namely $x\partial_x$: since $x\partial_x$ commutes with $x^2\bA$, it preserves the kernel of the latter and evidently it preserves the structure of the elements of the form \eqref{TraceElement}. For vector bundles $G\to\Y$ with endomorphism $\agen$ we defined in \cite{KrMe12b} Sobolev spaces of variable (anisotropic) order $\agen$, denoted by $H^\agen(\Y;G)$. The idea is that the local regularity of a generalized eigensection of $\agen$ is roughly measured by the real part of the eigenvalue. In the present situation the relevant space is $H^{2-\gen}(\Y;\trb_F)$ with
\begin{equation}\label{generator}
\gen = 1 + x\partial_x \in \End(\trb_F).
\end{equation}
The reason for the shift by $1$ in $\gen$ comes from working with $x^{-1}L^2_b$ as base Hilbert space: it has the effect of making the action $\varrho^\gen$ (see \eqref{kappaaction}) unitary. For general $\nu$ as in \eqref{InitialOperatorDag} we would use $\nu+x\partial_x$. 

\medskip

The analytic machinery needed to prove our theorems in Section~\ref{sec-MainResult} is developed in Section~\ref{sec-ExtensionOperator}. The proof utilizes the basic functional analytic principle that if the densely defined operator $A : \Dom \subset H \to H$ is symmetric, and if $A + \mu : \Dom \to H$ is invertible for some value of the real parameter $\mu$, then $A$ with domain $\Dom$ is selfadjoint.

The principal objective in Section~\ref{sec-ExtensionOperator} is the construction and analysis of a family of extension operators
\begin{equation*}
\Ext_\lambda:C^{-\infty}(\Y;\trb_F)\to C^\infty(\open \M;E), \; \lambda \in \R,
\end{equation*}
closely related to $\preP$ with a number of desirable properties; for instance, $(\Ext_\lambda-\preP)|_{C^\infty(\Y;\trb_F)}$ maps into $\dot C^\infty(\M;E)$,
\begin{align}
&\text{there exists }\delta_0 > 0\text{ such that }\Ext_{\lambda} : H^{2-\gen}(\Y;\trb_F) \to x^{\delta_0}H^{\infty}_e(\M;E), \label{introeq1} \\
&\Ext_{\lambda} - \Ext_{\lambda'} : H^{2-\gen}(\Y;\trb_F) \to x^1H^{\infty}_e(\M;E) \text{ for } \lambda,\lambda' \in \R
\label{introeq2}
\end{align}
by Lemma~\ref{ExtBasicMapProps}, and Lemma~\ref{ExtAdvMapProps} states that
\begin{equation}\label{introeq3}
(A + \lambda^2) \circ \Ext_{\lambda} : H^{2-\gen}(\Y;\trb_F) \to x^{-1}H^{\infty}_e(\M;E)\text{ for all }\lambda\in \R.
\end{equation}
By \eqref{introeq2} the domain
$$
\Dom = \Ext_{\lambda}\big(H^{2-\gen}(\Y;\trb_F)\big)+x^1H^2_e(\M;E)
$$
is independent of $\lambda \in \R$, \eqref{introeq1} implies that it is contained in the Friedrichs form domain $\scrH$ by means of a duality argument (see the proof of Theorem~\ref{StructureFriedrichsDomain}), and \eqref{introeq3} shows that it is contained in the maximal domain
\begin{equation*}
\Dom_{\max}(A) = \set{u \in x^{-1}L^2_b(\M;E) :  Au \in x^{-1}L^2_b(\M;E)}.
\end{equation*}
As pointed out earlier, $\Dom_F = \scrH \cap \Dom_{\max}(A)$, and consequently $\Dom \subset \Dom_F$. We then proceed to show that $A + \lambda^2 : \Dom \to x^{-1}L^2_b(\M;E)$ is invertible whenever $|\lambda|$ is large enough, see Proposition~\ref{Parametrix}, which implies that $\Dom = \Dom_F$. The proof of the invertibility utilizes a parameter-dependent parametrix and the specific structure of the extension operators $\Ext_{\lambda}$, in particular their dependence on the parameter $\lambda \in \R$.


\section{Standing assumptions}\label{sec-SetupAssumptions}

Henceforth we fix a second order wedge operator $A \in x^{-2}\Diff^2_e(\M;E)$ on a compact manifold with fibered boundary as described previously. The operator shall be viewed as an unbounded operator
\begin{equation}\label{AinfixedL2}
A : C_c^{\infty}(\open\M;E) \subset x^{-1}L^2_b(\M;E) \to x^{-1}L^2_b(\M;E).
\end{equation}
In applications one may instead view $A$ as acting on $x^{-\nu}L^2_b(\M;E)$ for some $\nu\in\R$. For instance, in geometric situations involving the Laplacian with respect to a $w$-metric $\wg$, a natural choice is $\nu=(\dim\Z+1)/2$ in view of
\begin{equation*}
L^2_{\wg}(\M;E) = x^{-(\dim\Z+1)/2}L^2_b(\M;E).
\end{equation*}
However, we will assume $\nu = 1$ in the sequel by replacing $A$ with $x^{-\nu+1}Ax^{\nu-1}$ as explained in the previous section.   The definition of inner product on the spaces $x^{-\nu}L^2_b(\M;E)$ is such that conjugation by powers of the boundary defining function is a unitary equivalence, so conditions such as symmetry or semiboundedness are preserved.

The standing assumptions on $A$ in Sections \ref{sec-MainResult} and \ref{sec-ExtensionOperator} will be
\begin{enumerate}[\quad (i)]
\item\label{SymmetricSemibounded} $A$ in \eqref{AinfixedL2} is symmetric and semibounded from below by $1$;
\item\label{w-ellipticity} $A$ is $w$-elliptic, i.e., the $w$-principal symbol $\wsym(A)$ is invertible on $\wT^*\M\minus 0$;
\item\label{b-strips} $\spec_e(A) \cap [\Y\times\{\sigma \in \C :  \Im(\sigma) \in \{-1,0,1\}\}] = \emptyset$;
\item\label{AhatInvertible} for each $\pmb \eta\in T^*\Y\minus 0$, zero is not an eigenvalue of the Friedrichs extension of $A_\wedge(\pmb\eta)$.
\end{enumerate}

\medskip
We elaborate on these conditions. The first condition of course means that 
\begin{equation*}
\langle Au,v \rangle_{x^{-1}L^2_b} = \langle u,Av \rangle_{x^{-1}L^2_b}
\end{equation*}
and
\begin{equation*}
\langle Au,u \rangle_{x^{-1}L^2_b} \geq  \langle u,u \rangle_{x^{-1}L^2_b}
\end{equation*}
for all $u,v \in C_c^{\infty}(\open\M;E)$. Note that if initially $A\geq c$ for some $c\in \R$ then replacing $A$ by $A + (1-c)\textup{Id}$ allows us to assume $c=1$ --- neither the symbolic assumptions (\ref{w-ellipticity})-(\ref{AhatInvertible}) nor the conclusions are affected by this change.

In \cite[Section 2]{GiKrMe10} we defined the wedge-cotangent bundle, $\wpi:\wT^*\M\to\M$ as the bundle whose smooth sections are the sections of $T^*\M$ which along the boundary are conormal to each fiber of $\wp$. There is a bundle map $\wev^*:\wT^*\M\to T^*\M$ over the identity which is an isomorphism over $\open \M$. The relevancy of this bundle lies in the fact that the principal symbol of $A$ is naturally a section of $\End(\wpi^*E)$. We elaborate. The differentials of functions in the ring
\begin{equation*}
\Ring = \{f \in C^{\infty}(\M) : f\big|_{\N}=\wp^*g \textup{ for some } g \in C^{\infty}(\Y)\}
\end{equation*}
generate the space of sections of $\wT^*\M$ as a module over $C^{\infty}(\M)$, see \cite{KrMe13}. Suppose $f \in \Ring$ and $\phi \in C^{\infty}(\M;\wT^*\M)$ corresponds to $df$ via $\wev^*\phi = df$. Let $\psi \in C_c^{\infty}(\open\M;E)$. Then, over $\open\M$,
\begin{equation*}
\wsym(A)(\phi)\psi = \lim_{\varrho \to \infty}\varrho^{-2}e^{-i\varrho f}A(e^{i\varrho f}\psi).
\end{equation*}
Consequently, the natural notion of ellipticity for $A$ is $w$-ellipticity, i.e., invertibility of $\wsym(A)$.

Let  $P = x^2A$, so $P \in \Diff^2_e(\M;E)$. The latter is a subset of $ \Diff^2_b(\M;E)$, so $P$ restricts to an operator $\bP:C^\infty(\N;E_\N)\to C^\infty(\N;E_\N)$ which differentiates only in the direction of the fibers of $P$. (Here and elsewhere the notation $E_W$ means the part of the vector bundle $E$ over the set or the point $W$.) The same is true for  $P_\sigma=x^{-\im\sigma}Px^{\im\sigma}\in \Diff^2_b(\M;E)$, and the indicial families of $P$ and of $A$ are defined to be both $\bP_\sigma$. This is a family $\bPhat(y,\sigma) : C^{\infty}(\Z_y;E_{\Z_y}) \to C^{\infty}(\Z_y;E_{\Z_y})$ depending on $(y,\sigma) \in \Y\times\C$, and assumption (\ref{b-strips}) means that we require it to be invertible for all $(y,\sigma) \in \Y \times\set{\sigma: \Im(\sigma) \in \set{-1,0,1}}$.

Let $\pi_\N:\N^\wedge\to\N$ be the inward pointing normal bundle of $\N$ (including the zero section which we identify with $\N$) and let $\wp_\wedge:\N^\wedge\to\Y$ be the obvious map. $\N^\wedge$ is trivialized using $dx|_{\N^\wedge}$. We also write $x$ for this function and $\Z^\wedge_y$ for $\wp_\wedge^{-1}(y)$. The vector bundle $E$ lifts to $E^\wedge$ via $\pi_\N$. Let $\Phi_*$ be a push-forward map from sections of $E^\wedge$ over $\N^\wedge$ near $\N$ to sections of $E$ over $\M$ near $\N$ determined by trivializing geometric data as described in \cite[Sections 2 and 3]{GiKrMe10}. It is an isometry between the corresponding $x^{-1}L^2_b$-spaces on these neighborhoods, with adjoint given by pull-back $\Phi^*$. Let
\begin{equation}\label{kappaaction}
\kappa_{\varrho}u(z,x) = \varrho u(z,\varrho x),\ \varrho>0,
\end{equation}
where $\varrho x$ refers to the radial action on the fibers of $\N^\wedge$ and translation in $E^\wedge$ along fibers is the canonical one; $\kappa$ is a unitary map in $x^{-1}L^2_b(\Z^\wedge_y;E_{\Z^\wedge_y})$. 

The normal family of $A$ is defined by the formula 
$$
A_\wedge(dg)u = \lim_{\varrho \to \infty}\varrho^{-2}\kappa_{\varrho}^{-1}e^{-i\varrho\wp_\wedge^*g}\Phi^*A\Phi_*e^{i\varrho\wp_\wedge^*g}\kappa_{\varrho}u,
$$
(see \cite[Proposition~2.2]{GiKrMe10}) where $g \in C^{\infty}(\Y)$, $u \in C_c^{\infty}(\open \N^\wedge;E^\wedge)$. It satisfies
\begin{equation}\label{kappahomogeneity}
A_\wedge(\varrho\pmb\eta) = \varrho^2 \kappa_{\varrho}A_\wedge(\pmb\eta)\kappa_{\varrho}^{-1}
\end{equation}
for $\varrho > 0$ and $\pmb\eta\in T^*\Y\minus 0$. For each $\pmb\eta$, $A_\wedge(\pmb\eta)$ is an element of $x^{-2}\Diff^2_b(\Z_y^\wedge;E_{\Z_y^\wedge})$. Its indicial family is canonically identifiable with that of $A$ at $y=\pi_\Y(\pmb\eta)$, and the operators $\bA(y,D_x)=x^{-2}\bPhat(y,xD_x)$ are identifiable with $A_\wedge(\pmb 0_y)$.

\begin{lemma}\label{Symmetry}
Let $A \in x^{-2}\Diff^2_e(\M;E)$ act as an unbounded operator in $x^{-1}L^2_b$, and suppose that $A = A^{\star} \geq 1$ on $C_c^{\infty}(\open\M;E)$. Then the following holds.
\begin{enumerate}[(a)]
\item\label{Symmetry_a} $\wsym(A) = \wsym(A)^{\star}$, and $\spec(\wsym(A)) \subset \overline{{\mathbb R}}_+$ everywhere on $\wT^*\M$.
\item\label{Symmetry_b} The indicial family $\bPhat(y,\sigma)$ satisfies
\begin{equation*}
\bPhat(y,\sigma) = [\bPhat(y,\overline{\sigma})]^{\star} : C^{\infty}(\Z_y;E_{\Z_y}) \to C^{\infty}(\Z_y;E_{\Z_y})
\end{equation*}
for all $y \in \Y$ and $\sigma \in \C$.
\item\label{Symmetry_c} The normal family
\begin{equation*}
A_\wedge(\pmb\eta) : C_c^{\infty}(\open\Z_y^\wedge;E_{\Z_y^\wedge}) \subset x^{-1}L^2_b(\Z_y^\wedge;E_{\Z_y^\wedge}) \to x^{-1}L^2_b(\Z_y^\wedge;E_{\Z_y^\wedge}),
\end{equation*}
where $y = \pi_{\Y}\pmb\eta$, $\pi_{\Y} : T^*\Y \to \Y$, satisfies $A_\wedge(\pmb\eta) = A_\wedge^{\star}(\pmb\eta) \geq 0$ on $T^*\Y$ in the sense that
\begin{gather*}
\langle A_\wedge(\pmb\eta)u,v \rangle_{x^{-1}L^2_b(\Z_y^\wedge;E_{\Z_y^\wedge})} = \langle u,A_\wedge(\pmb\eta)v \rangle_{x^{-1}L^2_b(\Z_y^\wedge;E_{\Z_y^\wedge})}, \\
\langle A_\wedge(\pmb\eta)u,u \rangle_{x^{-1}L^2_b(\Z_y^\wedge;E_{\Z_y^\wedge})} \geq 0 
\end{gather*}
for all $u,v \in C_c^{\infty}(\open\Z_y^\wedge;E_{\Z_y^\wedge})$.
\end{enumerate}
\end{lemma}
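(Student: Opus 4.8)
The plan is to deduce all three assertions from a single observation: each of $\wsym(A)$, $\bPhat$ and $A_\wedge$ is obtained from $A$ by conjugating with \emph{unitary} operators --- multiplication by a unimodular factor $e^{i\varrho f}$, the isometry $\Phi_*$ together with its adjoint $\Phi^*$, and the dilations $\kappa_\varrho$ --- followed by a rescaling by $\varrho^{-2}$ and a limit $\varrho\to\infty$. Conjugation by a unitary preserves the identity $\langle Au,v\rangle=\langle u,Av\rangle$ and the inequality $\langle Au,u\rangle\ge\|u\|^2$ verbatim on $C_c^\infty(\open\M;E)$, so both survive the limit, the constant $\|u\|^2$ being annihilated by $\varrho^{-2}$; the pointwise statements then follow by localizing the resulting integrated identities.

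For part (a): given $p\in\open\M$ and $v\in\wT_p^*\M$, choose $f\in C^\infty(\M;\R)$ with $\supp f\subset\open\M$ and $\wev^*v=df_p$; then $f\in\Ring$, and the section $\phi$ with $\wev^*\phi=df$ satisfies $\phi_p=v$. For $\psi_1,\psi_2\in C_c^\infty(\open\M;E)$, unitarity of multiplication by $e^{i\varrho f}$ and symmetry of $A$ give
\begin{equation*}
\big\langle\varrho^{-2}e^{-i\varrho f}A(e^{i\varrho f}\psi_1),\psi_2\big\rangle_{x^{-1}L^2_b}=\big\langle\psi_1,\varrho^{-2}e^{-i\varrho f}A(e^{i\varrho f}\psi_2)\big\rangle_{x^{-1}L^2_b},
\end{equation*}
and as $\varrho\to\infty$ (the convergence in the symbol formula being $C^\infty$ on the fixed compact set $\supp\psi_1\cup\supp\psi_2$, hence in $x^{-1}L^2_b$) this becomes $\langle\wsym(A)(\phi)\psi_1,\psi_2\rangle=\langle\psi_1,\wsym(A)(\phi)\psi_2\rangle$; likewise, multiplying the semiboundedness inequality by $\varrho^{-2}$ and letting $\varrho\to\infty$ yields $\langle\wsym(A)(\phi)\psi,\psi\rangle\ge0$. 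Since the $\psi_i$ are arbitrary, $\wsym(A)(\phi)$ is self-adjoint and nonnegative at every point over $\open\M$; as $f$ ranges over $\Ring$ every point of $\wT^*\M$ lying over $\open\M$ is realized as $(p,\phi_p)$, so $\spec(\wsym(A))\subset\overline\R_+$ there, and by continuity of $\wsym(A)$ and the Hermitian metric the two closed conditions extend to all of $\wT^*\M$.

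For part (c): fix $\pmb\eta\in T^*\Y$, write $\pmb\eta=dg(y)$ with $g\in C^\infty(\Y;\R)$, $y=\pi_\Y\pmb\eta$, and set $V_\varrho:=\Phi_*\,e^{i\varrho\wp_\wedge^*g}\,\kappa_\varrho$, a composition of isometries of the relevant $x^{-1}L^2_b$-spaces. Since $\kappa_\varrho$ and $e^{i\varrho\wp_\wedge^*g}$ are unitary while $\Phi^*$ is the adjoint of $\Phi_*$, the approximant $T_\varrho:=\varrho^{-2}\kappa_\varrho^{-1}e^{-i\varrho\wp_\wedge^*g}\Phi^*A\,\Phi_*e^{i\varrho\wp_\wedge^*g}\kappa_\varrho$ in the defining formula for $A_\wedge$ satisfies $\langle T_\varrho u,w\rangle_{x^{-1}L^2_b(\N^\wedge)}=\varrho^{-2}\langle AV_\varrho u,V_\varrho w\rangle_{x^{-1}L^2_b(\M)}$ for compactly supported $u,w$. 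For $u,v\in C_c^\infty(\open\N^\wedge;E^\wedge)$ one has $V_\varrho u,V_\varrho v\in C_c^\infty(\open\M;E)$ once $\varrho$ is large, so symmetry and semiboundedness of $A$ give
\begin{equation*}
\langle T_\varrho u,v\rangle=\varrho^{-2}\langle AV_\varrho u,V_\varrho v\rangle=\varrho^{-2}\langle V_\varrho u,AV_\varrho v\rangle=\langle u,T_\varrho v\rangle,\qquad\langle T_\varrho u,u\rangle\ge\varrho^{-2}\|u\|^2.
\end{equation*}
Letting $\varrho\to\infty$ by \cite[Proposition~2.2]{GiKrMe10}, and localizing the test sections to a small neighborhood of the fiber $\Z_y^\wedge$ --- on which $A_\wedge$ depends only through $\pmb\eta$, so that the $\N^\wedge$-pairings reduce to the $\Z_y^\wedge$-pairings by continuity in $y$ --- we obtain $A_\wedge(\pmb\eta)=A_\wedge^{\star}(\pmb\eta)\ge0$ on $C_c^\infty(\open\Z_y^\wedge;E_{\Z_y^\wedge})$ relative to $x^{-1}L^2_b(\Z_y^\wedge;E_{\Z_y^\wedge})$.

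For part (b): specialize (c) to $\pmb\eta=\pmb0_y$ (take $g$ constant), so that $x^{-2}\bPhat(y,xD_x)=\bA(y,D_x)=A_\wedge(\pmb0_y)$ is symmetric in $x^{-1}L^2_b(\Z_y^\wedge;E_{\Z_y^\wedge})$; since $\langle x^{-2}Qu,v\rangle_{x^{-1}L^2_b(\Z_y^\wedge)}=\langle Qu,v\rangle_{L^2_b(\Z_y^\wedge)}$ for any differential operator $Q$ on $\Z_y^\wedge$, the polynomial $\bPhat(y,xD_x)=\sum_{j=0}^2 c_j(xD_x)^j$ (with $c_j$ differential operators on $\Z_y$ acting in $E_{\Z_y}$) is symmetric in $L^2_b(\Z_y^\wedge)$. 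The Mellin transform in $x$, unitary from $L^2_b(\Z_y^\wedge)$ onto $L^2(\R;L^2(\Z_y;E_{\Z_y}))$ and conjugating $xD_x$ into multiplication by $\sigma$, turns this into $\bPhat(y,\sigma)=\bPhat(y,\sigma)^{\star}$ for $\sigma\in\R$, i.e.\ $c_j=c_j^{\star}$ for all $j$, whence $[\bPhat(y,\overline\sigma)]^{\star}=\sum_j c_j^{\star}\sigma^j=\sum_j c_j\sigma^j=\bPhat(y,\sigma)$ for every $\sigma\in\C$. The step I expect to be the main obstacle is the passage to the limit in (c): one must use \cite[Proposition~2.2]{GiKrMe10} to know that $V_\varrho u\in C_c^\infty(\open\M;E)$ for large $\varrho$ and that $T_\varrho u\to A_\wedge(\pmb\eta)u$ in a topology strong enough to pass the $x^{-1}L^2_b$-pairing, and to handle the bookkeeping between sections over $\N^\wedge$ and those over the single fiber $\Z_y^\wedge$ when localizing; parts (a) and (b) are then formal consequences.
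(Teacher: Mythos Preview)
Your argument for (a) and (c) is essentially identical to the paper's: both exploit that the oscillatory test defining $\wsym(A)$ and $A_\wedge$ conjugates $A$ by unitaries, so symmetry and semiboundedness survive the rescaled limit. The paper carries out the same computation you do, writing the $x^{-1}L^2_b$-pairing as $\int\langle\cdot,\cdot\rangle\,x^2\,d\m_b$ and passing to the limit term by term; your concern about the convergence in (c) is unwarranted---the paper simply invokes the definition of $A_\wedge$ as the stated limit and uses that $\Phi_*$ is an isometry with adjoint $\Phi^*$, exactly as you do, without any further justification.

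The only genuine difference is (b). The paper dispatches it in one line, saying it ``follows directly from the symmetry of $A$ in $x^{-1}L^2_b$ and does not make use of the semiboundedness,'' the implicit argument being that the formal adjoint of $P=x^2A$ in $L^2_b$ has indicial family $[\bPhat(y,\overline\sigma)]^\star$ by a standard $b$-calculus fact. You instead derive (b) from (c) by specializing to $\pmb\eta=\pmb0_y$, converting symmetry of $x^{-2}\bPhat(y,xD_x)$ in $x^{-1}L^2_b$ into symmetry of $\bPhat(y,xD_x)$ in $L^2_b$, and then using the Mellin transform to read off $c_j=c_j^\star$ coefficient by coefficient. This is a perfectly valid alternative; it is slightly more roundabout but has the virtue of being self-contained and not appealing to unstated facts about adjoints of $b$-operators.
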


\begin{proof}
Claim (\ref{Symmetry_b}) follows directly from the symmetry of $A$ in $x^{-1}L^2_b$ and does not make use of the semiboundedness. Claims (a) and (c) are consequences of the oscillatory tests used to define the $w$-principal symbol and the normal family from the action of the operator, in a fashion similar to the standard case, as follows.

To prove (\ref{Symmetry_a}), let $f\in \Ring$ and let $\phi \in C^{\infty}(\M;\wT^*\M)$ be such that $\wev^*\phi = df$. Let $\psi, \tilde \psi \in C_c^{\infty}(\open\M;E)$. Then
\begin{multline*}
\int_{\M}\langle \wsym(A)(\phi)\psi,\tilde{\psi}\rangle\, x^2d\m_b =
\lim_{\varrho \to \infty}\int_{\M}\langle \varrho^{-2}e^{-i\varrho f}A(e^{i\varrho f}\psi),\tilde{\psi}\rangle\, x^2d\m_b \\
= \lim_{\varrho \to \infty}\int_{\M}\langle \psi,\varrho^{-2}e^{-i\varrho f}A(e^{i\varrho f}\tilde{\psi})\rangle\, x^2d\m_b 
= \int_{\M}\langle \psi,\wsym(A)(\phi)\tilde{\psi}\rangle\, x^2d\m_b.
\end{multline*}
Since $\psi$, $\tilde{\psi}$, and $\phi$ are arbitrary, we conclude that $\wsym(A) = \wsym(A)^{\star}$ over $\open\M$, and by continuity then necessarily everywhere on $\wT^*\M$. Next,
\begin{multline*}
\int_{\M}\langle \wsym(A)(\phi)\psi,\psi\rangle\, x^2d\m_b =
\lim_{\varrho \to \infty}\int_{\M}\langle \varrho^{-2}e^{-i\varrho f}A(e^{i\varrho f}\psi),\psi\rangle\, x^2d\m_b \\
= \lim_{\varrho \to \infty}\varrho^{-2}\langle A(e^{i\varrho f}\psi),e^{i\varrho f}\psi\rangle_{x^{-1}L^2_b} \geq
\lim_{\varrho \to \infty}\varrho^{-2}\langle \psi,\psi \rangle_{x^{-1}L^2_b} = 0.
\end{multline*}
Since $\phi$ and $\psi$ are arbitrary this shows that $\wsym(A) \geq 0$ over $\open\M$ and again by continuity on all of $\M$.

To show (\ref{Symmetry_c}), let $g \in C^{\infty}(\Y)$ and  $u$, $v \in C_c^{\infty}(\open \N^\wedge;E^\wedge)$. Then 
\begin{align*}
\int_{\open \N^\wedge} &\langle A_\wedge(dg)u,v \rangle \,x^2d\m_b \\
&= \lim_{\varrho \to \infty}\int_{\open \N^\wedge}\langle \varrho^{-2}\kappa_{\varrho}^{-1}e^{-i\varrho\wp_\wedge^*g}\Phi^*A\Phi_*e^{i\varrho\wp_\wedge^*g}\kappa_{\varrho}u,v\rangle\,x^2d\m_b \\
&= \lim_{\varrho \to \infty}\int_{\M}\varrho^{-2}\langle A\Phi_*e^{i\varrho\wp_\wedge^*g}\kappa_{\varrho}u,
\Phi_*e^{i\varrho\wp_\wedge^*g}\kappa_{\varrho}v\rangle\,x^2d\m_b \\
&= \lim_{\varrho \to \infty}\int_{\open \N^\wedge}\langle u,\varrho^{-2}\kappa_{\varrho}^{-1}e^{-i\varrho\wp_\wedge^*g}\Phi^*A\Phi_*e^{i\varrho\wp_\wedge^*g}\kappa_{\varrho}v\rangle\,x^2d\m_b \\
&= \int_{\open \N^\wedge} \langle u,A_\wedge(dg)v \rangle \,x^2d\m_b.
\end{align*}
Because $g$, $u$, and $v$ are arbitrary, this shows that the normal family $A_\wedge(\pmb\eta)$ is symmetric on $C_c^{\infty}(\Z_y^\wedge;E_{\Z_y^\wedge})$. An argument analogous to the one used above for $\wsym(A)$ shows that $A_\wedge(\pmb\eta) \geq 0$ on $C_c^{\infty}(\Z_y^\wedge;E_{\Z_y^\wedge})$.
\end{proof}

Condition (\ref{w-ellipticity}), the invertibility of $\wsym(A)$ on $\wT^*\M\minus 0$, implies in view of (\ref{Symmetry_a}) of Lemma~\ref{Symmetry} that $\wsym(A) = \wsym(A)^{\star} > 0$ on $\wT^*\M\minus 0$. By homogeneity of the $w$-principal symbol and compactness of $\M$, there exists a constant $c_0 > 0$ such that
\begin{equation}\label{wsymLowerBound}
\wsym(A)(\pmb\xi) \geq c_0|\pmb\xi|^2
\end{equation}
for $\pmb\xi \in \wT^*\M$, where $|\cdot|$ corresponds to a metric on $\wT^*\M$ and $c_0>0$. In particular,
\begin{equation}\label{wsymparamell}
\wsym(A)(\pmb\xi) + \lambda^2 : \wpi^*E_{\pmb\xi} \to \wpi^*E_{\pmb\xi}, \quad \wpi : \wT^*\M \to \M,
\end{equation}
is invertible for all $(\pmb\xi,\lambda) \in \big(\wT^*\M\times\R\big)\minus 0$. This means that the operator $A + \lambda^2 \in x^{-2}\Diff_e^2(\M;E)$ is $w$-elliptic with parameter $\lambda \in \R$. This is an important property that will be used later in Section~\ref{sec-ExtensionOperator}.

The $w$-ellipticity of $A$ also implies that the indicial family $\bPhat$ consists of elliptic operators. The family
\begin{equation}\label{FredholmFamily}
\bPhat(y,\sigma) : H^2(\Z_y;E_{\Z_y}) \to L^2(\Z_y;E_{\Z_y})
\end{equation}
is a Fredholm family for each $y \in \Y$ that depends holomorphically on $\sigma \in \C$ and has a meromorphic inverse. As pointed out in Section~\ref{sec-Overview}, the indicial roots of $A$, the poles of $\bPhat(y,\sigma)^{-1}$, form a discrete subset in $\C$ for each $y \in \Y$ with only finitely many of these poles located in any given strip $|\Im(\sigma)|<r$. 

The indicial roots in general change with $y \in \Y$, and in principle they can vary all over the complex plane. By \cite{KrMe12a}, (\ref{b-strips}) guarantees first the existence of the vector bundle $\trb\to\Y$ associated with the Fredholm family \eqref{FredholmFamily} restricted to $\Sigma=\set{\sigma:-1<\Im\sigma<1}$ that was described in the previous section, next the existence of the splitting 
\begin{equation*}
\trb=\trb_F\oplus\trb_{aF}.
\end{equation*}

The normal family $A_\wedge(\pmb\eta)$ is for each $\pmb\eta \in T^*\Y$ an elliptic cone operator that is semibounded from below by Lemma~\ref{Symmetry}, and consequently has a Friedrichs extension. The Friedrichs extension for cone operators was systematically investigated in \cite{GiMe01}, and the main result there applied to $A_\wedge(\pmb\eta)$ gives that the Friedrichs domain is
\begin{equation*}
\Dom_{\wedge,F} = \Dom_{\wedge,\min} \oplus \omega \trb_{F,y},
\end{equation*}
where $\omega \in C_c^{\infty}(\overline{\R}_+)$ is a cut-off function near zero (i.e. $\omega \equiv 1$ near zero) that we consider as a function on $\Z_y^\wedge$ as usual ($\Dom_{\wedge,F}$ is independent of the choice of cut-off function in view of ellipticity). We leave implicit a reference to $\pmb\eta$ but point out that the minimal domain $\Dom_{\wedge,\min}$ of $A_\wedge(\pmb\eta)$ depends on $\pmb\eta \in T^*\Y$ only through $y = \pi_{\Y}\pmb\eta$ by (\ref{w-ellipticity}) and (\ref{b-strips}). These domains form a Hilbert space bundle over $\Y$ where the typical fiber is a cone Sobolev space, see \cite[Proposition~4.1]{GiKrMe10}. This implies that the Friedrichs domain $\Dom_{\wedge,F}$ of $A_\wedge(\pmb\eta)$ only depends on $y = \pi_{\Y}\pmb\eta$, and these domains also form a Hilbert space bundle over $\Y$.

Assumptions (\ref{w-ellipticity}) and (\ref{b-strips}) further imply that all closed extensions of $A_\wedge(\pmb\eta)$ are Fredholm for $\pmb\eta \in T^*\Y\minus 0$. In particular, the Friedrichs extension
$$
A_{\wedge,F}(\pmb\eta) : \Dom_{\wedge,F} \to x^{-1}L^2_b
$$
is Fredholm for $\pmb\eta \in T^*\Y\minus 0$ and selfadjoint in $x^{-1}L^2_b$ with $A_{\wedge,F}(\pmb\eta) \geq 0$ by Part (\ref{Symmetry_c}) of Lemma~\ref{Symmetry}. Assumption (\ref{AhatInvertible}) now guarantees that $A_{\wedge,F}(\pmb\eta)$ is in fact invertible, and using \eqref{kappahomogeneity} it is easy to see that this is equivalent to the existence of a constant $c_1 > 0$ such that
\begin{equation*}
A_\wedge(\pmb\eta) \geq c_1|\pmb\eta|^2 \textup{ on } C_c^{\infty}(\Z_y^\wedge;E_{\Z_y^\wedge}),
\end{equation*}
where $y = \pi_{\Y}\pmb\eta$, for all $\pmb\eta \in T^*\Y$. Here $|\cdot|$ is a metric on $T^*\Y$ (and the constant $c_1$ depends of course on the metric). In particular,
\begin{equation}\label{normalfamparamell}
A_\wedge(\pmb\eta) + \lambda^2 : \Dom_{\wedge,F} \to x^{-1}L^2_b(\Z_y^\wedge;E_{\Z^\wedge_y})
\end{equation}
is invertible for all $(\pmb\eta,\lambda) \in \bigl(T^*\Y\times\R\bigr)\minus 0$. This parameter-dependent ellipticity condition is the counterpart for the normal family to what \eqref{wsymparamell} is for the $w$-principal symbol. The invertibility of \eqref{normalfamparamell} on $\bigl(T^*\Y\times\R\bigr)\minus 0$ is another important property that we will use later in Section~\ref{sec-ExtensionOperator}.

We remark that, as an operator on $\Z_y^\wedge$,
\begin{equation*}
A_\wedge(\pmb\eta) : C_c^{\infty}(\open \Z_y^\wedge;E_{\Z^\wedge_y}) \subset x^{-1}L^2_b(\Z_y^\wedge;E_{\Z^\wedge_y}) \to x^{-1}L^2_b(\Z_y^\wedge;E_{\Z^\wedge_y})
\end{equation*}
decomposes as 
\begin{equation*}
A_\wedge(\pmb\eta) = A_\wedge(\pmb 0_y) + B(\pmb\eta) + C(\pmb\eta) 
\end{equation*}
where $C(\pmb\eta)$ acts as a bundle endomorphism on $E_{\Z^\wedge_y}$ as follows. Given $\nu\in \Z^\wedge_y$, let $p=\pi_\N(\nu)$ and let $\pmb\xi_{\pmb\eta,\nu}\in \wT_p^*\M$ be the unique element $\pmb\xi$ such that $\iota_\N^*\wev^*\pmb \xi=\wp^*\pmb\eta$ at $p$ and $\iota_{\Z^\wedge_y}^*\Phi^*\wev^*\pmb \xi=0$ with $\Phi$ the tubular neighborhood map used in the definition of $A_\wedge(\pmb\eta)$; $\iota_\N$ and $\iota_{\Z^\wedge_y}$ denote inclusion maps. Then
\begin{equation*}
C(\pmb\eta)=\wsym(A)(\pmb \xi_{\pmb\eta,\nu}).
\end{equation*}
We have
\begin{equation*}
A_\wedge(\pmb 0_y) + C(\pmb\eta) \geq c_0|\pmb\eta|^2 \text{ on } C_c^{\infty}(\Z_y^\wedge;E_{\Z^\wedge_y})
\end{equation*}
by (\ref{w-ellipticity}) and (\ref{b-strips}) (through \eqref{wsymLowerBound} and Part (\ref{Symmetry_c}) of Lemma~\ref{Symmetry}) for some $c_0 > 0$. Consequently, assumption (\ref{AhatInvertible}) on the normal family really only concerns the behavior of the first order term $B(\pmb\eta)$. In particular, in cases where $B(\pmb\eta) = 0$ (``product type operators''), assumption (\ref{AhatInvertible}) follows from (\ref{w-ellipticity}).


\section{The main result}\label{sec-MainResult}

Let $A \in x^{-2}\Diff^2_e(\M;E)$ be such that the standing assumptions (\ref{SymmetricSemibounded})-(\ref{AhatInvertible}) in Section~\ref{sec-SetupAssumptions} hold.

Define
\begin{equation}\label{SmoothFriedrichs}
\preP : C^{\infty}(\Y;\trb_F) \to C^{\infty}(\open\M;E)
\end{equation}
via $\preP(\tau) = \Phi_*(\omega\jmath\tau)$. Here $\omega \in C_c^{\infty}(\overline{\R}_+)$ is a fixed cut-off function supported near the origin with $\omega \equiv 1$ near $x = 0$, which we view as function on $\N^\wedge$ via the map $\N^\wedge\to\R$ determined by $x$, and $\Phi_*$ a push-forward map constructed using a tubular neighborhood map and parallel transport in $E$ along fibers as described in \cite{GiKrMe10}; it identifies sections of $E^\wedge$ on $\N^\wedge$ near the zero section of $\N^\wedge$ with sections of $E$ on $\M$ near the boundary $\N$. The map
$$
\jmath : C^{\infty}(\Y;\trb_F) \to C^{\infty}(\N^{\wedge};E^{\wedge})
$$
is defined fiberwise as the map that takes an element $\tau\in \trb_{F,y}$ and regards it as a section $\jmath_y\tau$ of $E^\wedge$ along $\Z^\wedge_y$ (which is what $\tau$ really is, see \eqref{TraceElement}).

Let
\begin{equation}\label{CinftyF}
C^{\infty}_{\trb_F}(\M;E) = \dot{C}^{\infty}(\M;E) + \preP(C^{\infty}(\Y;\trb_F)),
\end{equation}
where $\dot{C}^{\infty}(\M;E)$ denotes the $C^{\infty}$-sections of $E$ on $\M$ that vanish to infinite order on the boundary. 

\begin{theorem}\label{BasicFriedrichsTheorem}
The closure of
\begin{equation*}
A : C^{\infty}_{\trb_F}(\M;E) \subset x^{-1}L^2_b(\M;E) \to x^{-1}L^2_b(\M;E)
\end{equation*}
is the Friedrichs extension of $A$, i.e., the space $C^{\infty}_{\trb_F}(\M;E)$ is dense in $\Dom_F(A)$ with respect to the graph norm of $A$.
\end{theorem}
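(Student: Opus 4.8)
The plan is to follow the route sketched in Section~\ref{sec-Overview}, taking as given the construction and mapping properties of the extension operators $\Ext_\lambda$ (Lemmas~\ref{ExtBasicMapProps} and~\ref{ExtAdvMapProps}) together with the parameter-dependent invertibility statement (Proposition~\ref{Parametrix}) from Section~\ref{sec-ExtensionOperator}. Fix $\lambda\in\R$ and put
\[
\Dom = \Ext_\lambda\big(H^{2-\gen}(\Y;\trb_F)\big) + x^1H^2_e(\M;E).
\]
By \eqref{introeq2} this space is independent of $\lambda$. Since $A$ is bounded $x^1H^2_e(\M;E)\to x^{-1}L^2_b(\M;E)$ and, by \eqref{introeq3}, $(A+\lambda^2)\Ext_\lambda$ maps $H^{2-\gen}(\Y;\trb_F)$ into $x^{-1}H^\infty_e(\M;E)\subset x^{-1}L^2_b(\M;E)$, we get $\Dom\subset\Dom_{\max}(A)$. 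On the other hand \eqref{introeq1} gives $\Ext_\lambda\big(H^{2-\gen}(\Y;\trb_F)\big)\subset x^{\delta_0}H^\infty_e(\M;E)$ with $\delta_0>0$, and a duality argument (carried out in the proof of Theorem~\ref{StructureFriedrichsDomain}) using this together with $x^1H^2_e(\M;E)=\Dom_{\min}(A)\subset\scrH$ shows $\Dom\subset\scrH$, the Friedrichs form domain. As $\Dom_F=\scrH\cap\Dom_{\max}(A)$, this yields $\Dom\subset\Dom_F$.

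Next, Proposition~\ref{Parametrix} asserts that $A+\lambda^2:\Dom\to x^{-1}L^2_b(\M;E)$ is invertible once $|\lambda|$ is large; fix such a $\lambda$. Because $A$ is symmetric on $C_c^\infty(\open\M;E)$ and $\Dom\subset\Dom_F\cap\Dom_{\max}(A)$, the operator $A|_\Dom$ is a restriction of the selfadjoint operator $A_F$, hence is symmetric. The functional-analytic principle recalled in Section~\ref{sec-Overview} — a symmetric densely defined operator admitting an invertible real shift is selfadjoint — then forces $A|_\Dom$ to be selfadjoint. A selfadjoint operator has no proper symmetric extension, while $A_F$ is such an extension; therefore $\Dom=\Dom_F$.

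It remains to prove that $C^\infty_{\trb_F}(\M;E)$ is dense in $\Dom_F=\Dom$ for the graph norm of $A$. First, $\Ext_\lambda:H^{2-\gen}(\Y;\trb_F)\to\Dom$ is continuous for the graph norm: \eqref{introeq1} controls $\|\Ext_\lambda\tau\|_{x^{-1}L^2_b}$ (since $\delta_0>-1$) and \eqref{introeq3} controls $\|(A+\lambda^2)\Ext_\lambda\tau\|_{x^{-1}L^2_b}$, hence $\|A\Ext_\lambda\tau\|_{x^{-1}L^2_b}$; likewise the inclusion $x^1H^2_e(\M;E)\hookrightarrow\Dom$ is graph-norm continuous since $A$ is bounded $x^1H^2_e(\M;E)\to x^{-1}L^2_b(\M;E)$. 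Given $u\in\Dom_F$, write $u=\Ext_\lambda\tau+v$ with $\tau\in H^{2-\gen}(\Y;\trb_F)$ and $v\in x^1H^2_e(\M;E)$, choose $\tau_j\in C^\infty(\Y;\trb_F)$ with $\tau_j\to\tau$ in $H^{2-\gen}(\Y;\trb_F)$ (density of smooth sections in the variable-order Sobolev space, \cite{KrMe12b}) and $v_j\in\dot C^\infty(\M;E)$ with $v_j\to v$ in $x^1H^2_e(\M;E)$, and set $u_j=\Ext_\lambda\tau_j+v_j$. Then $u_j\to u$ in the graph norm by the continuity just noted, and since $\tau_j\in C^\infty(\Y;\trb_F)$ we have $\Ext_\lambda\tau_j-\preP(\tau_j)\in\dot C^\infty(\M;E)$, so
\[
u_j=\preP(\tau_j)+\big(\Ext_\lambda\tau_j-\preP(\tau_j)+v_j\big)\in\preP\big(C^\infty(\Y;\trb_F)\big)+\dot C^\infty(\M;E)=C^\infty_{\trb_F}(\M;E).
\]
The same identity shows $C^\infty_{\trb_F}(\M;E)\subset\Dom=\Dom_F$, so the graph-norm closure of $A$ on $C^\infty_{\trb_F}(\M;E)$ is exactly $A_F$, as claimed.

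The main obstacle is everything packaged into Section~\ref{sec-ExtensionOperator}: constructing $\Ext_\lambda$ with the precise mapping properties \eqref{introeq1}--\eqref{introeq3} and the correct $\lambda$-dependence, and establishing the parameter-dependent invertibility of $A+\lambda^2$ on $\Dom$ by a parametrix that simultaneously inverts the $w$-principal symbol (via \eqref{wsymparamell}) and the normal family (via \eqref{normalfamparamell}). Granting those, the only genuinely delicate point in the present argument is the duality argument placing $\Dom$ inside the form domain $\scrH$, for which the strictly positive weight $\delta_0>0$ in \eqref{introeq1} is essential; the density step is then routine given density of $C^\infty(\Y;\trb_F)$ in $H^{2-\gen}(\Y;\trb_F)$ and of $\dot C^\infty(\M;E)$ in $x^1H^2_e(\M;E)$.
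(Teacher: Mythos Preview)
Your proof is correct and follows essentially the same route as the paper: you inline the content of Theorem~\ref{StructureFriedrichsDomain} (showing $\Dom=\Ext_\lambda(H^{2-\gen})+x^1H^2_e\subset\scrH\cap\Dom_{\max}=\Dom_F$ via the duality argument and Lemmas~\ref{ExtBasicMapProps}--\ref{ExtAdvMapProps}, then using Proposition~\ref{Parametrix} to upgrade to equality) and then deduce density of $C^\infty_{\trb_F}$ exactly as the paper does, via density of $\dot C^\infty\oplus C^\infty(\Y;\trb_F)$ and property~(a) of $\Ext_\lambda$. The only cosmetic difference is in the step $\Dom=\Dom_F$: the paper argues directly that $A+\lambda_0^2$ is bijective on both $\Dom$ and $\Dom_F$ (the latter because $A_F\geq 1$), so the inclusion $\Dom\subset\Dom_F$ forces equality, whereas you pass through the ``symmetric with invertible real shift $\Rightarrow$ selfadjoint $\Rightarrow$ maximally symmetric'' principle---both are valid and equivalent here.
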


The proof is given below.

\medskip

\medskip
The operator $\preP$ can also be defined by the same formula on more general sections of $\trb_F$. However, this simple definition of an extension operator will not give that the image lies in $C^{\infty}(\open\M;E)$. In Section~\ref{sec-ExtensionOperator} we will construct a family of extension operators
\begin{equation*}
\Ext_{\lambda} : C^{-\infty}(\Y;\trb_F) \to C^{\infty}(\open\M;E),\ \lambda\in \R,
\end{equation*}
having the following properties:
\begin{enumerate}[\quad(a)]
\item\label{4.2a} $\Ext_{\lambda} - \preP : C^{\infty}(\Y;\trb_F) \to \dot{C}^{\infty}(\M;E)$;
\item\label{4.2b} there exists a $\delta_0 > 0$ such that $\Ext_{\lambda} : H^{2-\gen}(\Y;\trb_F) \to x^{\delta_0}H^{\infty}_e(\M;E)$;
\item\label{4.2c} for each $\lambda$, $\lambda'\in\R$, $\Ext_{\lambda} - \Ext_{\lambda'} : H^{2-\gen}(\Y;\trb_F) \to x^1H^{\infty}_e(\M;E)$;
\item\label{4.9d} $(A + \lambda^2) \circ \Ext_{\lambda} : H^{2-\gen}(\Y;\trb_F) \to x^{-1}H^{\infty}_e(\M;E)$ for all $\lambda\in \R$.
\end{enumerate}
Properties (\ref{4.2a})-(\ref{4.2c}) constitute Lemma~\ref{ExtBasicMapProps} and the last property is the content of Lemma~\ref{ExtAdvMapProps}. The Sobolev space $H^{2-\gen}(\Y;\trb_F)$ in (\ref{4.2b}) consists of sections of $\trb_F$ on $\Y$ of variable smoothness measured by the vector bundle endomorphism $2-\gen \in \End(\trb_F)$, where $\gen = 1 + x\partial_x$ as in \eqref{generator}. We refer to \cite{KrMe12b} for the definition and discussion of such spaces. 

Utilizing the Mellin transform it is immediate that \eqref{CinftyF} is a direct sum, and thus
\begin{equation}\label{iotaPrep}
\begin{bmatrix} \iota & \preP \end{bmatrix} :
\begin{array}{c} \dot{C}^{\infty}(\M;E) \\ \oplus \\ C^{\infty}(\Y;\trb_F) \end{array} \to C^{\infty}_{\trb_F}(\M;E)
\end{equation}
is bijective. Let 
\begin{equation*}
\Pi_{\Y} : \begin{array}{c} \dot{C}^{\infty}(\M;E) \\ \oplus \\ C^{\infty}(\Y;\trb_F) \end{array} \to C^{\infty}(\Y;\trb_F)
\end{equation*}
be the projection. The trace map,
\begin{equation*}
\gamma : C^{\infty}_{\trb_F}(\M;E) \to C^{\infty}(\Y;\trb_F),
\end{equation*}
is defined to be $\gamma=\Pi_\Y\circ \begin{bmatrix} \iota & \preP \end{bmatrix}^{-1}$. In other words,  $\gamma(u) = \tau$ if $u = u_0 + \preP(\tau) \in C^{\infty}_{\trb_F}(\M;E)$ with $u_0 \in \dot{C}^{\infty}(\M;E)$ and $\tau \in C^{\infty}(\Y;\trb_F)$.

\begin{theorem}\label{StructureFriedrichsDomain}
The operator $\Ext_\lambda$ restricts to a map
\begin{equation*}
\Ext_\lambda : H^{2-\gen}(\Y;\trb_F) \to \Dom_F(A)
\end{equation*}
and the map $\gamma : C^{\infty}_{\trb_F}(\M;E) \to C^{\infty}(\Y;\trb_F)$ extends by continuity to a bounded surjection
\begin{equation*}
\gamma : \Dom_F(A) \to H^{2-\gen}(\Y;\trb_F).
\end{equation*}
Let $\iota:x^1H^2_e(\M;E) \to \Dom_F(A)$ be the inclusion map.\footnote{Recall that $x^1H^2_e(\M;E)=\Dom_{\min}(A)$ by {\cite[Theorem 4.2]{GiKrMe10}}.} Then
\begin{equation}\label{iotaExtlambda}
\begin{bmatrix} \iota & \Ext_\lambda \end{bmatrix} : \begin{array}{c} x^1H^2_e(\M;E) \\ \oplus \\ H^{2-\gen}(\Y;\trb_F) \end{array} \to \Dom_{F}(A)
\end{equation}
is a topological isomorphism with inverse
\begin{equation}\label{InverseIotaExt}
\begin{bmatrix} \Id - (\Ext_\lambda\circ\gamma) \\ \gamma \end{bmatrix} : \Dom_F(A) \to \begin{array}{c} x^1H^2_e(\M;E) \\ \oplus \\ H^{2-\gen}(\Y;\trb_F) \end{array}.
\end{equation}
\end{theorem}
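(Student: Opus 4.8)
The plan is to bootstrap off the functional-analytic facts already assembled in the excerpt. First I would fix $\lambda \in \R$ with $|\lambda|$ large enough that $A + \lambda^2 : \Dom \to x^{-1}L^2_b(\M;E)$ is invertible, where $\Dom = \Ext_\lambda(H^{2-\gen}(\Y;\trb_F)) + x^1H^2_e(\M;E)$; by the Overview section this $\Dom$ is independent of $\lambda$, is contained in $\scrH \cap \Dom_{\max}(A) = \Dom_F(A)$ (via properties \eqref{4.2b} and \eqref{4.9d}, the duality argument for the form domain, and $\eqref{4.9d}$ for the maximal domain), and the invertibility of $A + \lambda^2$ on $\Dom$ forces $\Dom = \Dom_F(A)$ since $A + \lambda^2$ is injective on $\Dom_F(A)$ (as $A \geq 1$) and surjective already from $\Dom$. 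This simultaneously proves the first assertion, that $\Ext_\lambda$ maps $H^{2-\gen}(\Y;\trb_F)$ into $\Dom_F(A)$. Note the splitting $\Dom = \Ext_\lambda(\cdots) + x^1H^2_e$ is valid for every $\lambda$ by \eqref{4.2c}, so in particular for all $\lambda$, not just large ones.

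Next I would establish that the sum defining $\Dom_F(A)$ is direct, i.e. that $\Ext_\lambda(H^{2-\gen}(\Y;\trb_F)) \cap x^1H^2_e(\M;E) = 0$. For this the natural tool is the trace map $\gamma$. On the smooth core $C^\infty_{\trb_F}(\M;E)$, $\gamma$ is defined via the bijection \eqref{iotaPrep}, and directness there is the Mellin-transform observation already cited. To extend $\gamma$ to $\Dom_F(A)$ I would show $\gamma \circ \Ext_\lambda = \Id$ on $C^\infty(\Y;\trb_F)$: indeed $\Ext_\lambda - \preP$ lands in $\dot C^\infty(\M;E)$ by \eqref{4.2a}, so $\Ext_\lambda(\tau) = \preP(\tau) + (\text{something in }\dot C^\infty)$, whence $\gamma(\Ext_\lambda\tau) = \tau$ by definition of $\gamma$. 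Since $C^\infty(\Y;\trb_F)$ is dense in $H^{2-\gen}(\Y;\trb_F)$ and $\Ext_\lambda$ is continuous into $\Dom_F(A)$ (by the just-proved mapping property plus the estimates in Lemmas~\ref{ExtBasicMapProps} and \ref{ExtAdvMapProps} giving boundedness in the graph norm), one sets $\gamma := \gamma \circ \Ext_\lambda \circ (\text{something})$; more precisely, define on $\Dom_F(A)$ the candidate $\gamma(u)$ via the decomposition $u = \Ext_\lambda(\tau) + v$, $v \in x^1H^2_e$, and check this is well-defined, which is exactly directness of the sum. To get directness: if $\Ext_\lambda(\tau) = v \in x^1H^2_e(\M;E) = \Dom_{\min}(A)$, approximate $\tau$ by smooth $\tau_j$; then $\Ext_\lambda(\tau_j) \in C^\infty_{\trb_F}(\M;E)$ with $\gamma(\Ext_\lambda\tau_j) = \tau_j$, and $\Ext_\lambda(\tau_j) \to v$ in the graph norm, so it suffices to know that $\gamma$ is continuous on $C^\infty_{\trb_F}(\M;E)$ in the graph norm with values in $H^{2-\gen}(\Y;\trb_F)$, forcing $\tau_j \to 0$ hence $\tau = 0$. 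That continuity of $\gamma$ on the core — i.e. the estimate $\|\gamma(u)\|_{H^{2-\gen}} \lesssim \|u\|_{x^{-1}L^2_b} + \|Au\|_{x^{-1}L^2_b}$ — is the one genuine inequality to be proved, and I would derive it by writing $u = \Ext_\lambda(\gamma u) + (u - \Ext_\lambda(\gamma u))$ with the second term in $x^1H^2_e$, applying $A + \lambda^2$, using that $(A+\lambda^2)\Ext_\lambda$ composed with the inverse parametrix of $A+\lambda^2$ on $x^1H^2_e$ is controlled, and then \emph{inverting} on the $\trb_F$-component; concretely, the invertibility of the normal family $A_\wedge(\pmb\eta) + \lambda^2$ from \eqref{normalfamparamell} and the parametrix construction of Section~\ref{sec-ExtensionOperator} produce a left inverse controlling $\|\gamma u\|_{H^{2-\gen}}$ by $\|(A+\lambda^2)u\|_{x^{-1}L^2_b}$ modulo $x^1H^2_e$-terms, which are in turn bounded by the graph norm.

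With $\gamma : \Dom_F(A) \to H^{2-\gen}(\Y;\trb_F)$ bounded, well-defined, satisfying $\gamma \circ \Ext_\lambda = \Id$ and vanishing on $x^1H^2_e(\M;E)$, the map $[\iota \;\; \Ext_\lambda]$ in \eqref{iotaExtlambda} is bounded (both columns are), and the map \eqref{InverseIotaExt}, $u \mapsto (u - \Ext_\lambda(\gamma u),\, \gamma(u))$, is bounded with image in the stated space: its first component is annihilated by $\gamma$ (since $\gamma(u) - \gamma(\Ext_\lambda\gamma u) = \gamma(u) - \gamma(u) = 0$) hence lies in $\ker\gamma \cap \Dom_F(A) = x^1H^2_e(\M;E)$ by directness. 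A direct check shows the two composites are the respective identities: $[\iota\;\;\Ext_\lambda]$ followed by \eqref{InverseIotaExt} sends $(v,\tau) \mapsto v + \Ext_\lambda\tau \mapsto (v + \Ext_\lambda\tau - \Ext_\lambda(\gamma(v+\Ext_\lambda\tau)),\, \gamma(v+\Ext_\lambda\tau)) = (v + \Ext_\lambda\tau - \Ext_\lambda\tau,\, \tau) = (v,\tau)$ using $\gamma v = 0$ and $\gamma\Ext_\lambda\tau = \tau$; and the reverse composite is the identity on $\Dom_F(A)$ by the same algebra. Surjectivity of $\gamma$ is immediate since $\gamma\circ\Ext_\lambda = \Id$ on the dense subset and both maps are bounded, or simply from the isomorphism \eqref{iotaExtlambda}. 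The open mapping theorem then upgrades the algebraic inverse to a topological isomorphism; since the decomposition $\Dom = \Dom_F(A)$ holds for all $\lambda$, the statement holds for every $\lambda \in \R$. The main obstacle is precisely the graph-norm continuity of the trace map $\gamma$ on the smooth core: this is where the parametrix of Section~\ref{sec-ExtensionOperator}, the invertibility \eqref{normalfamparamell} of the normal family, and the mapping properties \eqref{4.2b}--\eqref{4.9d} of $\Ext_\lambda$ must all be combined, and it is the only place a genuine estimate (rather than bookkeeping) is needed.
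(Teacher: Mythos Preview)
Your overall strategy is sound and uses the same ingredients as the paper, but you have inverted the order of the argument in a way that makes it harder than necessary. The ``main obstacle'' you identify---graph-norm continuity of $\gamma$ on the smooth core---is something the paper never proves as a separate step. Instead, the paper observes that Proposition~\ref{Parametrix} (invertibility of the operator matrix $[A+\lambda_0^2 \;\; (A+\lambda_0^2)\circ\Ext_{\lambda_0}]$ for large $|\lambda_0|$) \emph{directly} gives injectivity of $[\iota\;\;\Ext_{\lambda_0}]$, since a left factor of an injective map is injective. That is your directness of the sum, obtained in one line. Surjectivity then follows exactly as you say (both $\Dom_F(A)$ and the range map bijectively onto $x^{-1}L^2_b$ under $A+\lambda_0^2$, and one contains the other). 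With $[\iota\;\;\Ext_{\lambda_0}]$ a topological isomorphism, the paper simply \emph{defines} the extended $\gamma$ as $\Pi_\Y\circ[\iota\;\;\Ext_{\lambda_0}]^{-1}$, which is automatically continuous, and checks via the factorization through $[\iota\;\;\preP]$ that this agrees with the original $\gamma$ on the smooth core.

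Your route---prove continuity of $\gamma$ first via the parametrix component $T(\lambda)$, then deduce directness, then assemble the isomorphism---is not wrong: the relation $T(\lambda)\circ(A+\lambda^2)\approx \Pi_\Y$ on the core does follow from Proposition~\ref{Parametrix}. But this is just unpacking the inverse of the operator matrix one component at a time, and the approximation arguments you sketch (density of smooth sections, passing limits through $\gamma$) are exactly what the isomorphism statement packages cleanly. In short: use Proposition~\ref{Parametrix} to get bijectivity of $[\iota\;\;\Ext_{\lambda_0}]$ in one stroke, then read off $\gamma$ and its properties from the inverse, rather than trying to build $\gamma$ first.
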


\begin{proof}[Proof of Theorem~\ref{BasicFriedrichsTheorem}]
Property (\ref{4.2a}) above and the bijectivity of the operator \eqref{iotaPrep} give that
\begin{equation}\label{iotaExtCinf}
\begin{bmatrix} \iota & \Ext_\lambda \end{bmatrix}  = \begin{bmatrix} \iota & \preP \end{bmatrix} \circ
\begin{bmatrix} \Id & \Ext_\lambda - \preP \\ 0 & \Id \end{bmatrix} :
\begin{array}{c} \dot{C}^{\infty}(\M;E) \\ \oplus \\ C^{\infty}(\Y;\trb_F) \end{array} \to C^{\infty}_{\trb_F}(\M;E)
\end{equation}
is also bijective. On the other hand, the map \eqref{iotaExtlambda} is a topological isomorphism which restricts to \eqref{iotaExtCinf}. Because the space of smooth functions $\dot{C}^{\infty}(\M;E) \oplus C^{\infty}(\Y;\trb_F)$ is dense in $x^1H^2_e(\M;E) \oplus H^{2-\gen}(\Y;\trb_F)$, we obtain that its image $C_{\trb_F}^{\infty}(\M;E)$ under the isomorphism \eqref{iotaExtlambda} is dense in $\Dom_F(A)$.
\end{proof}

\begin{proof}[Proof of Theorem~\ref{StructureFriedrichsDomain}]
Let $\scrH \hookrightarrow x^{-1}L^2_b(\M;E)$ denote the Hilbert space obtained by completion of $C_c^{\infty}(\open\M;E)$ with respect to the inner product
\begin{equation*}
[u,v]_\scrH = \langle Au,u \rangle_{x^{-1}L^2_b}, \ u,v \in C_c^{\infty}(\open\M;E).
\end{equation*}
We shall take advantage of the characterization of the Friedrichs domain as $\Dom_F(A) = \scrH\cap\Dom_{\max}(A)$.

We show first that $H^1_e(\M;E) \subset \scrH$ with continuous embedding. Indeed, the operator $A : H^1_e(\M;E) \to x^{-2}H^{-1}_e(\M;E)$ is continuous, and the $x^{-1}L^2_b$-inner product satisfies
$$
\bigl|\langle f,g \rangle_{x^{-1}L^2_b}\bigr| \leq C\|f\|_{x^{-2}H^{-1}_e}\,\|g\|_{H^{1}_e}
$$
for all $f,g \in C_c^{\infty}(\open\M;E)$ and some $C$. Thus, for $u \in C_c^{\infty}(\open\M;E)$, we have
$$
\bigl|\langle Au,u \rangle_{x^{-1}L^2_b}\bigr| \leq C \|Au\|_{x^{-2}H^{-1}_e}\,\|u\|_{H^{1}_e} \leq C' \|u\|^2_{H^1_e},
$$
for some $C'>0$, and of course $H^1_e(\M;E) \hookrightarrow x^{-1}L^2_b(\M;E)$. Consequently, $H^1_e(\M;E) \hookrightarrow \scrH$ as claimed. 

It follows that also $x^1H^2_e(\M\;E)$ is continuously embedded in $\scrH$. In view of (\ref{4.2b}) above we obtain
\begin{equation}\label{OpMatrixPre}
\begin{bmatrix} \iota & \Ext_{\lambda} \end{bmatrix} : \begin{array}{c} x^1H^2_e(\M;E) \\ \oplus \\ H^{2-\gen}(\Y;\trb_F) \end{array} \to \scrH.
\end{equation}
We now claim that the image of this map is also contained in $\Dom_{\max}(A)$, therefore is a subspace of $\Dom_F(A)$. Note that $(A+\lambda^2)$ maps $x^1H^2_e(\M;E)$ into $x^{-1}L^2_b(\M;E)$, while on the other hand (\ref{4.9d}) above (see Lemma~\ref{ExtAdvMapProps}) asserts that $(A+\lambda^2)\circ \Ext_\lambda$ maps $H^{2-\gen}(\Y;\trb_F)$ into $x^{-1}H^\infty_e(\M;E)$. Thus
\begin{equation}\label{OpMatrix}
\begin{bmatrix} A + \lambda^2 & (A + \lambda^2)\circ\Ext_{\lambda} \end{bmatrix} :
\begin{array}{c}
x^1H^2_e(\M;E) \\ \oplus \\ H^{2-\gen}(\Y;\trb_F)
\end{array}
\to x^{-1}L^2_b(\M;E)
\end{equation}
for every $\lambda \in \R$. Since $\begin{bmatrix} A + \lambda^2 & (A + \lambda^2)\circ\Ext_{\lambda} \end{bmatrix} =
(A + \lambda^2) \circ \begin{bmatrix} \iota & \Ext_{\lambda} \end{bmatrix}$, the image of \eqref{OpMatrixPre} is contained in $\Dom_{\max}(A)$. Continuity of the map \eqref{iotaExtlambda} (with the graph norm on $\Dom_F(A)$) follows from that of \eqref{OpMatrix}. We thus have that \eqref{iotaExtlambda} maps into $\Dom_F$ as claimed and we proceed to show that this map is an isomorphism.

By Proposition~\ref{Parametrix} the operator family \eqref{OpMatrix} is invertible for all $\lambda \in \R$ with sufficiently large $|\lambda|$. Fix one such $\lambda_0$ for which invertibility holds. Then
\begin{equation}\label{iotalambda0}
\begin{bmatrix} \iota & \Ext_{\lambda_0} \end{bmatrix} : \begin{array}{c} x^1H^2_e(\M;E) \\ \oplus \\ H^{2-\gen}(\Y;\trb_F) \end{array} \to \Dom_F(A)
\end{equation}
is invertible: Injectivity follows because $(A + \lambda_0^2)\circ \begin{bmatrix} \iota & \Ext_{\lambda_0} \end{bmatrix}$ is injective in view of the invertibility of \eqref{OpMatrix}. Let $R_{\lambda_0}$ be the range of \eqref{iotalambda0}. By construction of the Friedrichs extension,
$$
A + \lambda_0^2 : \Dom_F(A) \to x^{-1}L^2_b(\M;E)
$$
is invertible, and at the same time $A + \lambda_0^2 : R_{\lambda_0} \to x^{-1}L^2_b(\M;E)$ is invertible because \eqref{OpMatrix} is invertible. But $R_{\lambda_0}$ is a subspace of $\Dom_F(A)$, and therefore we necessarily have $R_{\lambda_0} = \Dom_F(A)$, which shows that \eqref{iotalambda0} is also surjective, hence invertible. 

The invertibility of \eqref{iotalambda0} now implies the invertibility of \eqref{iotaExtlambda} for arbitrary $\lambda$,
because
\begin{equation*}
\begin{bmatrix} \iota & \Ext_\lambda \end{bmatrix} = \begin{bmatrix} \iota & \Ext_{\lambda_0} \end{bmatrix}\circ\begin{bmatrix} \Id & \Ext_{\lambda} - \Ext_{\lambda_0} \\ 0 & \Id \end{bmatrix}
\end{equation*}
and
\begin{equation*}
\begin{bmatrix} \Id & \Ext_{\lambda} - \Ext_{\lambda_0} \\ 0 & \Id \end{bmatrix} : \begin{array}{c} x^1H^2_e(\M;E) \\ \oplus \\ H^{2-\gen}(\Y;\trb_F) \end{array} \to \begin{array}{c} x^1H^2_e(\M;E) \\ \oplus \\ H^{2-\gen}(\Y;\trb_F) \end{array}
\end{equation*}
is trivially invertible. Note that $\Ext_\lambda - \Ext_{\lambda_0} : H^{2-\gen}(\Y;\trb_F) \to x^1H^2_e(\M;E)$ by (\ref{4.2c}) above (or see Lemma \ref{ExtBasicMapProps}).

The factorization of $\begin{bmatrix} \iota & \Ext_\lambda \end{bmatrix}$ in \eqref{iotaExtCinf} yields $\Pi_{\Y} \circ \begin{bmatrix} \iota & \Ext_\lambda \end{bmatrix}^{-1} = \Pi_{\Y} \circ \begin{bmatrix} \iota & \preP \end{bmatrix}^{-1}$, which is $\gamma$ by definition. By the invertibility of \eqref{iotaExtlambda},
\begin{equation}\label{InverseOfSplit}
\begin{bmatrix} \iota & \Ext_\lambda \end{bmatrix}^{-1} : \Dom_F(A) \to \begin{array}{c} x^1H^2_e(\M;E) \\ \oplus \\ H^{2-\gen}(\Y;\trb_F) \end{array}
\end{equation}
is continuous, and trivially $\Pi_{\Y}$ extends continuously to the projection onto the space $H^{2-\gen}(\Y;\trb_F)$. Consequently $\Pi_{\Y} \circ \begin{bmatrix} \iota & \Ext_\lambda \end{bmatrix}^{-1}$, now as a map
$$
 \Dom_F(A) \to H^{2-\gen}(\Y;\trb_F),
$$
is the stated continuous extension of $\gamma$. 
\end{proof}

The bijectivity of \eqref{iotaPrep} gives that 
\begin{equation*}
0\to \dot C^\infty(\M;E) \xrightarrow{\iota}C^\infty_{\trb_F}(\M;E)\xrightarrow{\gamma}C^\infty(\Y;\trb_F)\to 0
\end{equation*}
is an exact sequence. Extending this we have

\begin{theorem}\label{ExactSequenceFriedrichsTheorem}
The short sequence
\begin{equation*}
0\to x^1H^2_e(\M;E) \xrightarrow{\iota}\Dom_F(A)\xrightarrow{\gamma}H^{2-\gen}(\Y;\trb_F)\to 0
\end{equation*}
is exact. 
\end{theorem}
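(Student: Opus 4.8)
The plan is to obtain Theorem~\ref{ExactSequenceFriedrichsTheorem} as an immediate corollary of the isomorphism \eqref{iotaExtlambda} established in Theorem~\ref{StructureFriedrichsDomain}, exactly as the smooth version \eqref{SmoothShortExact} follows from the bijectivity of \eqref{iotaPrep}. The three things to check are: (i) $\iota$ is injective with closed range; (ii) $\gamma$ is surjective; (iii) $\ker\gamma = \operatorname{ran}\iota$. All three are built into Theorem~\ref{StructureFriedrichsDomain}, so the proof is essentially bookkeeping.

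First I would recall that $x^1H^2_e(\M;E) = \Dom_{\min}(A)$ by \cite[Theorem~4.2]{GiKrMe10}, so the inclusion $\iota$ makes sense; injectivity and closedness of the range are automatic from the fact that $\begin{bmatrix}\iota & \Ext_\lambda\end{bmatrix}$ is a topological isomorphism, since $\iota$ is then the composition of that isomorphism with the closed inclusion of the first summand $x^1H^2_e(\M;E) \hookrightarrow x^1H^2_e(\M;E)\oplus H^{2-\gen}(\Y;\trb_F)$. Surjectivity of $\gamma : \Dom_F(A)\to H^{2-\gen}(\Y;\trb_F)$ is stated verbatim in Theorem~\ref{StructureFriedrichsDomain} (it is $\Pi_\Y$ composed with the inverse isomorphism \eqref{InverseOfSplit}, and $\Pi_\Y$ is visibly onto). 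For exactness in the middle, I would use the explicit inverse \eqref{InverseIotaExt}: if $\gamma(u)=0$ then $u = (\Id - \Ext_\lambda\circ\gamma)(u) = \iota$ applied to an element of $x^1H^2_e(\M;E)$, so $u\in\operatorname{ran}\iota$; conversely $\gamma\circ\iota = \Pi_\Y\circ\begin{bmatrix}\iota & \Ext_\lambda\end{bmatrix}^{-1}\circ\iota = 0$ because $\begin{bmatrix}\iota & \Ext_\lambda\end{bmatrix}^{-1}\circ\iota$ lands in the first summand, which $\Pi_\Y$ annihilates.

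There is no real obstacle here: the entire analytic content — the construction of $\Ext_\lambda$, the mapping properties \eqref{4.2a}–\eqref{4.9d}, the identification $\Dom_F(A)=\scrH\cap\Dom_{\max}(A)$, and the parameter-dependent invertibility of Proposition~\ref{Parametrix} — has already been expended in proving Theorem~\ref{StructureFriedrichsDomain}. The only point worth a sentence is that the maps $\iota$ and $\gamma$ in the statement are precisely the continuous extensions of the maps with the same names in the smooth exact sequence \eqref{SmoothShortExact}, which is what justifies calling this an "extension" of that sequence; this compatibility is guaranteed because $\begin{bmatrix}\iota & \Ext_\lambda\end{bmatrix}$ restricts to $\begin{bmatrix}\iota & \preP\end{bmatrix}\circ\begin{bmatrix}\Id & \Ext_\lambda-\preP\\ 0 & \Id\end{bmatrix}$ on the dense subspace of smooth sections, as recorded in \eqref{iotaExtCinf}, and $\Ext_\lambda-\preP$ maps into $\dot C^\infty(\M;E)$ by \eqref{4.2a}. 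I would therefore write the proof in three or four lines, citing Theorem~\ref{StructureFriedrichsDomain} for the isomorphism and reading off exactness from the block-triangular form of the isomorphism and its inverse.

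\begin{proof}
By Theorem~\ref{StructureFriedrichsDomain} the map $\begin{bmatrix} \iota & \Ext_\lambda \end{bmatrix}$ in \eqref{iotaExtlambda} is a topological isomorphism, so $\iota : x^1H^2_e(\M;E) \to \Dom_F(A)$ is injective with closed range, being the restriction of that isomorphism to the first summand. The map $\gamma$ equals $\Pi_\Y \circ \begin{bmatrix} \iota & \Ext_\lambda \end{bmatrix}^{-1}$ by the last part of the proof of Theorem~\ref{StructureFriedrichsDomain}, and $\Pi_\Y$ is surjective onto $H^{2-\gen}(\Y;\trb_F)$, hence so is $\gamma$. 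Finally, from the formula \eqref{InverseIotaExt} for $\begin{bmatrix} \iota & \Ext_\lambda \end{bmatrix}^{-1}$ we read off that $\gamma\circ\iota = 0$ (the composite $\begin{bmatrix} \iota & \Ext_\lambda \end{bmatrix}^{-1}\circ\iota$ maps into the first summand, which $\Pi_\Y$ annihilates), and conversely if $u \in \Dom_F(A)$ satisfies $\gamma(u)=0$ then $u = \bigl(\Id - \Ext_\lambda\circ\gamma\bigr)(u) = \iota\bigl((\Id - \Ext_\lambda\circ\gamma)(u)\bigr)$ with $(\Id - \Ext_\lambda\circ\gamma)(u) \in x^1H^2_e(\M;E)$, so $u \in \operatorname{ran}\iota$. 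Thus $\ker\gamma = \operatorname{ran}\iota$, and the sequence is exact.
\end{proof}
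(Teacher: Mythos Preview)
Your proof is correct and follows essentially the same route as the paper's: both derive exactness directly from the block structure of the isomorphism $\begin{bmatrix}\iota & \Ext_\lambda\end{bmatrix}$ and its inverse \eqref{InverseIotaExt}, reading off $\gamma\circ\iota=0$, $\ker\gamma\subset\operatorname{ran}\iota$, and surjectivity of $\gamma$ (the paper phrases the last as $\gamma\circ\Ext=\Id$, you phrase it via $\Pi_\Y$, which is the same thing). The only organizational difference is that the paper's proof simultaneously \emph{verifies} the formula \eqref{InverseIotaExt} for the inverse (stated but not checked in the proof of Theorem~\ref{StructureFriedrichsDomain}), whereas you take it as already established by that theorem's statement.
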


In the following we take $\lambda=0$ and write $\Ext$ for $\Ext_0$. 

\begin{proof} Writing $\begin{bmatrix} \alpha \\ \gamma \end{bmatrix}$ for the map \eqref{InverseOfSplit} we obtain from 
\begin{equation*}
\begin{bmatrix} \iota & \Ext \end{bmatrix} \circ \begin{bmatrix} \alpha \\ \gamma \end{bmatrix} = \alpha + \Ext\circ\gamma = \Id : \Dom_F(A) \to \Dom_F(A),
\end{equation*}
that  $\alpha = \Id-\Ext\circ \gamma$ maps into $x^1H^2_e(\M;E)$ and also that the inverse of \eqref{iotaExtlambda} is indeed \eqref{InverseIotaExt}. In particular, $u\in \Dom_F(A)$ and $\gamma(u)=0$ implies $u\in x^1H^2_e(\M;E)$. From
\begin{equation*}
\begin{bmatrix} \alpha \\ \gamma \end{bmatrix} \circ \begin{bmatrix} \iota & \Ext \end{bmatrix} = \begin{bmatrix} \alpha\circ\iota & \alpha\circ\Ext \\ \gamma\circ\iota & \gamma\circ\Ext \end{bmatrix} = \begin{bmatrix} \Id & 0 \\ 0 & \Id \end{bmatrix} : \begin{array}{c} x^1H^2_e(\M;E) \\ \oplus \\ H^{2-\gen}(\Y;\trb_F) \end{array} \to \begin{array}{c} x^1H^2_e(\M;E) \\ \oplus \\ H^{2-\gen}(\Y;\trb_F) \end{array}
\end{equation*}
we get $\gamma\circ\iota = 0$, which gives exactness at $\Dom_F(A)$. This shows that $\ker(\gamma) = x^1H^2_e(\M;E)$ as stated, and, moreover, it implies that $\gamma\circ\Ext = \Id$ on $H^{2-\gen}(\Y;\trb_F)$ which shows that $\gamma : \Dom_F(A) \to H^{2-\gen}(\Y;\trb_F)$ is surjective with right inverse $\Ext$. 
\end{proof}

\begin{theorem}\label{FriedrichsIsFredholm}
There is $\delta_0 > 0$ such that $\Dom_F(A) \hookrightarrow x^{\delta_0}H^2_e(\M;E)$. Consequently, the Friedrichs extension $A : \Dom_F(A) \to x^{-1}L^2(\M;E)$ is Fredholm with compact resolvent. 
\end{theorem}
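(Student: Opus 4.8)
The plan is to derive the embedding $\Dom_F(A) \hookrightarrow x^{\delta_0}H^2_e(\M;E)$ from the isomorphism \eqref{iotaExtlambda} of Theorem~\ref{StructureFriedrichsDomain}, and then read off Fredholmness and compactness of the resolvent from standard properties of the weighted edge Sobolev spaces together with the already-established invertibility of $A+\lambda_0^2$ on $\Dom_F(A)$. Concretely, pick $\lambda=0$ and use that $\begin{bmatrix} \iota & \Ext \end{bmatrix} : x^1H^2_e(\M;E) \oplus H^{2-\gen}(\Y;\trb_F) \to \Dom_F(A)$ is a topological isomorphism. The first summand maps into $x^1H^2_e(\M;E) \subset x^{\delta_0}H^2_e(\M;E)$ trivially for any $\delta_0 \le 1$, and by property (\ref{4.2b}) (Lemma~\ref{ExtBasicMapProps}) there is a $\delta_0>0$ with $\Ext : H^{2-\gen}(\Y;\trb_F) \to x^{\delta_0}H^\infty_e(\M;E) \hookrightarrow x^{\delta_0}H^2_e(\M;E)$. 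Shrinking $\delta_0$ if necessary so that $\delta_0 \le 1$, both components of the isomorphism land in $x^{\delta_0}H^2_e(\M;E)$, hence $\Dom_F(A) \hookrightarrow x^{\delta_0}H^2_e(\M;E)$ continuously; this is the stated embedding.

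Next I would invoke the compactness of the inclusion $x^{\delta_0}H^2_e(\M;E) \hookrightarrow x^{-1}L^2_b(\M;E)$ (equivalently $x^{\delta_0}H^2_e \hookrightarrow H^0_e = L^2_b$ after the weight bookkeeping, since $\delta_0 > 0$ and $2 > 0$), a standard Rellich-type property for weighted edge Sobolev spaces on the compact manifold $\M$ (cf.\ \cite{Mazz91,GiKrMe10}). Thus the inclusion $\Dom_F(A) \hookrightarrow x^{-1}L^2_b(\M;E)$ is compact. Then for the fixed $\lambda_0$ from the proof of Theorem~\ref{StructureFriedrichsDomain}, $A+\lambda_0^2 : \Dom_F(A) \to x^{-1}L^2_b(\M;E)$ is invertible, so its inverse $(A+\lambda_0^2)^{-1} : x^{-1}L^2_b(\M;E) \to \Dom_F(A) \hookrightarrow x^{-1}L^2_b(\M;E)$ is a compact operator on $x^{-1}L^2_b(\M;E)$. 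A bounded operator that becomes invertible after adding a compact perturbation, equivalently one whose resolvent (at one point, hence all) is compact, is Fredholm of index zero; in particular $A : \Dom_F(A) \to x^{-1}L^2_b(\M;E) = A + \lambda_0^2 - \lambda_0^2$ differs from the invertible operator $A+\lambda_0^2$ by the bounded operator $\lambda_0^2\cdot(\text{compact inclusion})\colon \Dom_F(A)\to x^{-1}L^2_b$, which is compact relative to the graph norm, so $A : \Dom_F(A) \to x^{-1}L^2_b(\M;E)$ is Fredholm. Compactness of the resolvent is immediate: $(A-z)^{-1}$ exists for $z$ in the resolvent set (nonempty since $A$ is selfadjoint and semibounded), and factors through the compact embedding $\Dom_F(A)\hookrightarrow x^{-1}L^2_b(\M;E)$.

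The only genuinely non-bookkeeping point is the compactness of $x^{\delta_0}H^2_e(\M;E) \hookrightarrow x^{-1}L^2_b(\M;E)$, i.e.\ the Rellich lemma in the edge calculus with a positive weight gain. This is where I expect the main (though still routine) work to lie: one reduces to a collar neighborhood of $\N$ where the edge structure is modeled on $(0,1)_x \times \Z \times \Y$, uses that on the compact interior part the ordinary Rellich theorem applies, and near $x=0$ uses the weight factor $x^{\delta_0}$ together with the extra edge derivatives to get both the decay and the smoothing needed for compactness — exactly the statement underlying \cite[Theorem 4.2]{GiKrMe10} and its surrounding discussion. Given that this is available in the cited literature, the proof is short; I would simply cite it.

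\begin{proof}
Take $\lambda = 0$ and let $\Ext = \Ext_0$. By Theorem~\ref{StructureFriedrichsDomain},
$$
\begin{bmatrix} \iota & \Ext \end{bmatrix} : x^1H^2_e(\M;E) \oplus H^{2-\gen}(\Y;\trb_F) \to \Dom_F(A)
$$
is a topological isomorphism. Let $\delta_0 > 0$ be as in property (\ref{4.2b}) above (Lemma~\ref{ExtBasicMapProps}), and shrink it if necessary so that $\delta_0 \leq 1$. Then $\iota$ maps $x^1H^2_e(\M;E)$ into $x^{\delta_0}H^2_e(\M;E)$ continuously, while $\Ext : H^{2-\gen}(\Y;\trb_F) \to x^{\delta_0}H^\infty_e(\M;E) \hookrightarrow x^{\delta_0}H^2_e(\M;E)$ by (\ref{4.2b}). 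Hence $\begin{bmatrix} \iota & \Ext \end{bmatrix}$ maps into $x^{\delta_0}H^2_e(\M;E)$, and composing with the inverse isomorphism gives the continuous embedding $\Dom_F(A) \hookrightarrow x^{\delta_0}H^2_e(\M;E)$.

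The inclusion $x^{\delta_0}H^2_e(\M;E) \hookrightarrow x^{-1}L^2_b(\M;E) = H^0_e(\M;E)$ is compact, since $\delta_0 > 0$, $2 > 0$, and $\M$ is compact (Rellich's lemma for weighted edge Sobolev spaces, cf.\ \cite{Mazz91,GiKrMe10}). Therefore $\Dom_F(A) \hookrightarrow x^{-1}L^2_b(\M;E)$ is compact. Fix $\lambda_0$ as in the proof of Theorem~\ref{StructureFriedrichsDomain}, so that $A + \lambda_0^2 : \Dom_F(A) \to x^{-1}L^2_b(\M;E)$ is invertible. Then $A : \Dom_F(A) \to x^{-1}L^2_b(\M;E)$ equals $(A + \lambda_0^2) - \lambda_0^2 \cdot j$, where $j : \Dom_F(A) \hookrightarrow x^{-1}L^2_b(\M;E)$ is the compact inclusion; being a compact perturbation of an invertible operator, $A : \Dom_F(A) \to x^{-1}L^2_b(\M;E)$ is Fredholm (of index zero). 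Finally, $A$ is selfadjoint and bounded below by $1$, so its resolvent set is nonempty, and for $z$ in it $(A - z)^{-1} : x^{-1}L^2_b(\M;E) \to \Dom_F(A) \xrightarrow{j} x^{-1}L^2_b(\M;E)$ factors through the compact inclusion $j$ and is therefore compact.
\end{proof}
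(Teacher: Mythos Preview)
Your proof is correct and follows essentially the same route as the paper: use the isomorphism \eqref{iotaExtlambda} together with property~(\ref{4.2b}) to get $\Dom_F(A)\hookrightarrow x^{\delta_0}H^2_e(\M;E)$, then invoke the compact Rellich embedding of weighted edge Sobolev spaces into $x^{-1}L^2_b(\M;E)$; your Fredholm and compact-resolvent deductions are simply a more explicit spelling-out of what the paper states in one line. One minor slip: $x^{-1}L^2_b(\M;E)=x^{-1}H^0_e(\M;E)$, not $H^0_e(\M;E)$, but this does not affect the argument since $\delta_0>-1$ and $2>0$ still give compactness.
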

\begin{proof} By (\ref{4.2b}) above we have
$$
\Ext : H^{2-\gen}(\Y;\trb_F) \to x^{\delta_0}H^{\infty}_e(\M;E)
$$
for some $0 < \delta_0 < 1$, and therefore
$$
\begin{bmatrix} \iota & \Ext \end{bmatrix} : \begin{array}{c} x^1H^2_e(\M;E) \\ \oplus \\ H^{2-\gen}(\Y;\trb_F) \end{array} \to x^1H^2_e(\M;E) + x^{\delta_0}H^{\infty}_e(\M;E) \hookrightarrow x^{\delta_0}H^2_e(\M;E).
$$
This shows that $\Dom_F(A) \hookrightarrow x^{\delta_0}H^2_e(\M;E)$, and because the latter edge Sobolev space embeds compactly into $x^{-1}L^2_b(\M;E)$ we obtain that $A : \Dom_F(A) \to x^{-1}L^2_b(\M;E)$ is Fredholm with compact resolvent as claimed. 
\end{proof}


\section{The extension operator}\label{sec-ExtensionOperator}

We continue to work with an operator $A \in x^{-2}\Diff^2_e(\M;E)$ under the  standing assumptions (\ref{SymmetricSemibounded})-(\ref{AhatInvertible}) stated in Section~\ref{sec-SetupAssumptions}.

Fix an arbitrary Riemannian metric $g_{\Y}$ on $\Y$ and a positive number $\delta \ll 1$. Fix also a $C^{\infty}$ function $[\cdot] : \R \to \R$ such that $[r] \geq 1$ for all $r \in \R$ and $[r] = |r|$ for all $|r| \geq r_0$ for some sufficiently large $r_0 > 0$, and a cut-off function $\omega \in C_c^{\infty}(\overline{\R}_+)$ with support contained in $\lbra 0,c_0\rpar$ for some $0 < c_0 \ll 1$ such that $\omega \equiv 1$ near $x = 0$.

Let $U \subset \Y$ be an open set, small enough so that $\trb_F|_U$ allows for a $\delta$-admissible trivialization over $U$ with respect to the endomorphism $\gen \in \End(\trb_F)$ in \eqref{generator}. This means (see \cite{KrMe12b}) that the restriction $\trb_F|_U$ splits into a direct sum of trivial $\gen$-invariant subbundles $\trb_F|_U \cong \bigoplus_k\trb^k_{F,U}$ such that there exist closed pairwise disjoint disks $D_k$ in the complex plane, of diameter $< \delta$, such that, for each $k$, the spectrum of the restriction of $\gen$ to each fiber of $\trb^k_{F,U}$ over $U$ is contained in $D_k$. We refer to such a trivialization as a $\delta$-admissible trivialization. 

A section $\tau$ of $\trb_U$ is of course an element $\tau(y) \in \trb_{F,y}$ for each $y\in U$, that is, a singular function on $\Z^\wedge_y$ of the form in \eqref{TraceElement}. Smoothness of $\tau$ means that it, regarded as a function on $\open\wp_\wedge^{-1}(U) \subset \open\N^\wedge$, is a $C^{\infty}$ section of $\wp_\wedge^*E$. By $\open\wp_\wedge$ we mean $\wp_\wedge$ restricted to the interior of $\N^\wedge$. We refer to \cite{KrMe12a} for further information about vector bundles like $\trb_F$.

Pick a smooth local frame $\tau_{\mu} : U \to \trb_F$, $\mu = 1,\ldots,N$, respecting the $\delta$-admissible trivialization described above. We refer to such a local frame as a $\delta$-admissible frame.  Assuming that $U$ is small enough, we have that $\open\wp_\wedge^{-1}(U) \cong U \times \open\Z^\wedge$ with the typical fiber $\Z$ of the boundary fibration $\wp : \N \to \Y$, and so each $\tau_{\mu}$ can equally be considered as defined and smooth on $U \times \Z\times(0,\infty)$. Let $\Z^\wedge$ be the typical fiber of $\wp_\wedge:\N^\wedge\to \Y$.

Suppose further that $U$ is the domain of local coordinates $y_1,\ldots,y_q$ for $\Y$. Suppose that the metric on $T^*\Y$ takes the form
\begin{equation*}
|\eta|_y^2 = \sum_{k,\ell} g^{k \ell}(y)\eta_k\eta_{\ell}
\end{equation*}
in these coordinates. Define $[\eta,\lambda]_y = \bigl[ |\eta|_y^2 + \lambda^2 \bigr]$.

For arbitrary $\varphi,\psi \in C_c^{\infty}(U)$ and each $\lambda \in \R$ define
${}_{\psi}(\ext_{\lambda})_{\varphi} : C^{-\infty}(U) \to C^{\infty}(U\times\open\Z^\wedge)$ by
$$
[{}_{\psi}(\ext_{\lambda})_{\varphi}(f)](y,z,x) = \frac{\psi(y)}{(2\pi)^q}\int_{\R^q} e^{iy\eta}\omega(x[\eta,\lambda]_y)(\varphi f)\ft(\eta)\,d\eta,
$$
where $(\varphi f)\ft$ is the Fourier transform of $\varphi f$. Observe that $[{}_{\psi}(\ext_{\lambda})_{\varphi}(f)](y,z,x)$ is independent of $z \in \Z$, but we consider it a smooth function on $U\times\open\Z^\wedge$, or $\open\wp_\wedge^{-1}(U)$. 

Next, using the $\delta$-admissible frame introduced above, define
\begin{equation*}
{}_{\psi}(\Ext_{\lambda})_{\varphi} : C^{-\infty}(U;\trb_F) \to C^{\infty}(U\times\open\Z^\wedge;E^\wedge)
\end{equation*} 
by
\begin{equation*}
{}_{\psi}(\Ext_{\lambda})_{\varphi}(\tau) = \sum\limits_{\mu=1}^N {}_{\psi}(\ext_{\lambda})_{\varphi}(u^{\mu})\tau_{\mu} \textup{ if }
\tau = \sum\limits_{\mu=1}^N u^{\mu}\tau_{\mu}.
\end{equation*}
For every $\tau \in C^{-\infty}(U;\trb_F)$, the function ${}_{\psi}(\Ext_{\lambda})_{\varphi}(\tau)$ is a $C^{\infty}$ function on $\open\wp_\wedge^{-1}(U)$. Its support is contained in a compact subset of $\wp_\wedge^{-1}(U)$ near the boundary $\wp^{-1}(U)$, and thus can be considered a smooth section of $E^\wedge$ on $\open\N^\wedge$ that is supported near $\N$.

Now let $U_s$, $s=1,\ldots, S$, be a finite open covering of $\Y$ with open sets $U_s$ of the kind used above, and let $\{\varphi_s\}_{s=1}^S$ be a subordinate partition of unity. Moreover, let $\psi_s \in C_c^{\infty}(U_s)$ be such that $\psi_s \equiv 1$ in a neighborhood of $\supp \varphi_s$. On each of the sets $U_s$ we have an operator ${}_{\psi_s}(\Ext_{\lambda})_{\varphi_s}$ as just defined. 

Define 
\begin{equation*}
\Ext_{\lambda} = \Phi_*\sum_{s=1}^S {}_{\psi_s}(\Ext_{\lambda})_{\varphi_s} : C^{-\infty}(\Y;\trb_F) \to C^{\infty}(\open\M;E).
\end{equation*}
We shall refer to $\Ext=\Ext_0$ simply as the extension operator.

Recall that $\Phi_*$ is the push-forward map that identifies sections of $E^\wedge$ on $\N^\wedge$ with sections of $E$ on $\M$ near the boundary $\N$. We utilized a similar construction of an extension operator acting from sections of the full trace bundle associated with a first order elliptic wedge operator into the maximal domain of that operator in \cite[Section~7]{KrMe13}. We will follow the ideas there to establish the mapping properties of the family of extension operators, thereby taking advantage of certain technical improvements obtained in \cite{Kr15}.

\begin{lemma}\label{ExtBasicMapProps}
The family of extension operators $\Ext_{\lambda} : C^{-\infty}(\Y;\trb_F) \to C^{\infty}(\open\M;E)$ has the following mapping properties:
\begin{enumerate}[(a)]
\item With the map $\preP : C^{\infty}(\Y;\trb_F) \to C^{\infty}(\open\M;E)$ in \eqref{SmoothFriedrichs} we have
$$
\Ext_{\lambda} - \preP : C^{\infty}(\Y;\trb_F) \to \dot{C}^{\infty}(\M;E).
$$
\item There exists $\delta_0 > 0$ such that
$$
\Ext_{\lambda} : H^{2-\gen}(\Y;\trb_F) \to x^{\delta_0}H^{\infty}_e(\M;E).
$$
\item We have
$$
\Ext_{\lambda} - \Ext_{\lambda'} : H^{2-\gen}(\Y;\trb_F) \to x^1H^{\infty}_e(\M;E).
$$
\end{enumerate}
\end{lemma}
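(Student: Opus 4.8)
The plan is to work locally in one of the trivializing charts $U_s$ and reduce everything to explicit estimates on the scalar operators ${}_{\psi}(\ext_{\lambda})_{\varphi}$, which are essentially Fourier multipliers in $y$ with the cut-off symbol $\omega(x[\eta,\lambda]_y)$, times the singular frame sections $\tau_\mu$. For part (a), first I would observe that on smooth sections $\tau = \sum u^\mu \tau_\mu$ the difference ${}_{\psi}(\ext_{\lambda})_{\varphi}(u^\mu) - \psi\varphi\, u^\mu$ is, after Fourier transforming back, an integral of $[\omega(x[\eta,\lambda]_y) - 1]$ against a rapidly decaying $(\varphi u^\mu)\ft$. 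Since $\omega \equiv 1$ near $x=0$ and the argument $x[\eta,\lambda]_y$ is small precisely when $x \lesssim 1/[\eta,\lambda]_y$, the factor $\omega(x[\eta,\lambda]_y)-1$ vanishes identically on a neighborhood of $x=0$ whose size shrinks only polynomially in $\eta$; integrating against the Schwartz function $(\varphi u^\mu)\ft$ then yields a remainder that vanishes to infinite order in $x$, uniformly. After multiplying by $\tau_\mu$ (which itself has only the mild singular/log growth of \eqref{TraceElement}) and applying $\Phi_*$ and summing over $s$, this gives $\Ext_\lambda \tau - \preP\tau \in \dot C^\infty(\M;E)$. One must check smoothness up to the corner, but this is routine differentiation under the integral sign together with the rapid decay of the Fourier transform. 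The same computation with $\lambda,\lambda'$ in place of $\lambda,0$, but now without the infinite-order vanishing — instead using that $\omega(x[\eta,\lambda]_y)-\omega(x[\eta,\lambda']_y)$ is $O(x)$ times a symbol of the same type and one order lower — will be the seed of part (c); however for (c) we need it on the Sobolev space, not just on smooth sections, so it really follows from the refined estimates below.

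For part (b), the strategy is to recognize ${}_{\psi}(\Ext_{\lambda})_{\varphi}$ as (a frame-trivialized version of) an edge pseudodifferential/Mellin-type operator whose symbol is $\omega(x[\eta,\lambda]_y)$ acting on the fiber $\trb_{F,y}$, and to use the mapping properties of the variable-order spaces $H^{2-\gen}(\Y;\trb_F)$ from \cite{KrMe12b,Kr15}. Concretely, on each $\delta$-admissible summand $\trb^k_{F,U}$ the endomorphism $\gen$ is close to a scalar $\zeta_k$ (the center of $D_k$), so the corresponding component $\tau_\mu$ behaves essentially like $x^{1-\zeta_k}$ (recall $\gen = 1 + x\partial_x$, so an eigenvalue $\zeta$ of $\gen$ corresponds to $x^{\zeta-1}$ up to logs). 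The point of the cut-off weight $\omega(x[\eta,\lambda]_y)$ is exactly that it converts $|\eta|$-regularity of $f$ into $x$-decay: on the support of $\omega(x[\eta,\lambda]_y)$ one has $x[\eta,\lambda]_y \lesssim 1$, hence $x^{\delta_0} \lesssim [\eta,\lambda]_y^{-\delta_0}$, which lets us trade a factor $[\eta]^{\delta_0}$ from the multiplier against the $2-\gen$ order of the input. Because $-1 < \Im\sigma < 0$ on $\Sigma_F$, the real parts of the eigenvalues of $\gen$ lie in an interval that, after the shift, leaves room for a positive $\delta_0$; this is where assumption (\ref{b-strips}) (the line $\Im\sigma = 0$ being free of $\spec_e(A)$) is used. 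The desired conclusion $\Ext_\lambda : H^{2-\gen}(\Y;\trb_F) \to x^{\delta_0}H^\infty_e(\M;E)$ then follows by the standard edge-Sobolev mapping theory: each $y$-derivative costs one order, each application of $x\partial_x$ or the fiber vector fields is absorbed because $\omega$ and its derivatives obey symbol estimates in $[\eta,\lambda]_y$, and the gain of $x^{\delta_0}$ is uniform in $\lambda$ because $[\eta,\lambda]_y \geq [\,|\eta|\,]_y$.

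For part (c), I would write $\Ext_\lambda - \Ext_{\lambda'} = \Phi_* \sum_s {}_{\psi_s}\bigl((\Ext_\lambda)_{\varphi_s} - (\Ext_{\lambda'})_{\varphi_s}\bigr)$ and examine the symbol difference $\omega(x[\eta,\lambda]_y) - \omega(x[\eta,\lambda']_y)$. By the fundamental theorem of calculus this equals $x\int_0^1 \omega'(x[\eta,\mu(t)]_y)\,\partial_t[\eta,\mu(t)]_y\,dt$ along a path $\mu(t)$ from $\lambda'$ to $\lambda$ in the $\lambda$-variable; the prefactor $x$ produces the extra weight, and $\omega'$ is supported in a compact annulus in its argument, so $x[\eta,\mu]_y$ is both bounded above and bounded below there, giving an additional gain of a full power $[\eta]^{-1}$ worth of order. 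Combining the one extra power of $x$ with the $\delta_0$-gain from part (b) pushes the target from $x^{\delta_0}H^\infty_e$ up to $x^{1+\delta_0}H^\infty_e \hookrightarrow x^1 H^\infty_e$; alternatively one argues directly that the net weight gain is $\geq 1$. The main obstacle throughout is the bookkeeping in part (b): making the informal slogan ``$\omega(x[\eta,\lambda]_y)$ trades $|\eta|$-smoothness for $x$-decay'' into a rigorous statement about the \emph{variable-order} spaces $H^{2-\gen}(\Y;\trb_F)$, where the order is a bundle endomorphism and one must pass through the $\delta$-admissible decomposition, control commutators of $\gen$ with the multiplier, and verify uniformity in $\lambda \in \R$. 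I expect to reduce this to the already-established machinery of \cite{KrMe12b,Kr15} rather than reproving it, citing those references for the symbol calculus on variable-order spaces and only checking that $\omega(x[\eta,\lambda]_y)$ is an admissible symbol with the claimed gain.
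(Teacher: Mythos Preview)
Your plan is essentially the same as the paper's, and parts~(a) and~(b) are correctly outlined: the paper too reduces to the local pieces ${}_{\psi}(\Ext_{\lambda})_{\varphi}$, views them as pseudodifferential operators with operator-valued symbols $\sym(y,\eta,\lambda)=\psi(y)\omega(x[\eta,\lambda]_y)\jmath_y$ taking values in the cone Sobolev spaces $\K^{s,\delta_0}_t(\Z^\wedge;E^\wedge)$, and derives the symbol estimates from the twisted homogeneity $\sym(y,\varrho\eta,\varrho\lambda)=\kappa_\varrho\sym(y,\eta,\lambda)\kappa_\varrho^{-1}$. Your slogan ``$\omega(x[\eta,\lambda]_y)$ trades $|\eta|$-regularity for $x$-decay'' is exactly this twisted homogeneity; you should make that identification explicit and work through the cone Sobolev target, since that is what lets you invoke \cite{Kr15} and then identify the abstract wedge space $\mathcal W^{s'}(U;\K^{s,\delta_0}_t)$ with the local model of $x^{\delta_0}H^s_e$.

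For part~(c) your mechanism is slightly off. The paper does use the fundamental theorem of calculus in $\lambda$, but the key observation is not ``an extra prefactor $x$ combined with the $\delta_0$-gain from (b)''. Rather, the $\lambda$-derivative of the symbol contains $(x\partial_x\omega)(x[\eta,\lambda]_y)$, and $x\partial_x\omega\in C_c^\infty(\R_+)$ is supported \emph{away from} $x=0$. Consequently the symbol estimates hold with target $\K^{s,\nu}_t$ for \emph{every} weight $\nu$, not just $\nu=\delta_0$; choosing $\nu=1$ gives $x^1H^\infty_e$ directly. Your route (factoring out $x$ and then stacking it on top of the $\delta_0$ from (b)) does not literally work because the $\delta_0$-weight in (b) comes from the growth of $\jmath_y\tau_\mu$ near $x=0$, which is irrelevant once the cutoff is supported away from zero --- so there is nothing to stack. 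You already note that $\omega'$ lives in a compact annulus, which is exactly the right observation; just use it to conclude the weight is arbitrary rather than trying to combine gains.
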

\begin{proof}
We prove these claims by a careful analysis of the operators
$$
{}_{\psi}(\Ext_{\lambda})_{\varphi} : C^{-\infty}(U;\trb_F) \to C^{\infty}(U\times\open\Z^\wedge;E^\wedge)
$$
that appear in the definition of $\Ext_{\lambda}$.

The first claim follows from a direct analysis of the Fourier integrals that are involved in the definition of the operators ${}_{\psi}(\ext_{\lambda})_{\varphi}$ that act on the coefficients of the section $\tau \in C^{\infty}(U;\trb_F)$ with respect to the chosen $\delta$-admissible frame $\{\tau_{\mu}\}_{\mu=1}^N$ of $\trb_F$. We have
$$
{}_{\psi}(\ext_{\lambda})_{\varphi}(f) - \psi\varphi f \in \dot{C}^{\infty}
$$
for every $f \in C^{\infty}(U)$ (see \cite[Lemma~7.6]{KrMe13} for a proof). This implies that
$$
[{}_{\psi}(\Ext_{\lambda})_{\varphi}](\tau) - \psi(y)\varphi(y)\omega(x)\jmath_y\tau(y,z,x) \in \dot{C}^{\infty}(U\times\Z^\wedge;E^\wedge)
$$
for $\tau \in C^{\infty}(U;\trb_F)$, from which (a) follows immediately. Recall that $\jmath_y$ is the map that takes an element $\tau\in \trb_{F,y}$ and regards it as a section $\jmath_y\tau$ of $E^\wedge$ along $\Z^\wedge\approx \Z^\wedge_y$.

Before proceeding with the remainder of the proof we remind the reader of the (weighted) cone Sobolev spaces\footnote{For readers familiar with Melrose's scattering pseudodifferential calculus \cite{Mel95}, we note that the change of variables $x' = 1/x$ yields an isomorphism between distributions in the space $H^s_{\textup{cone}}$ supported away from $x = 0$, as they occur in the definition of $\K^{s,\nu}_t$, and distributions in the scattering Sobolev space $H^s_\sc(\Z\times\lbra0,1\rpar)$ supported near $x' = 0$.}
\begin{equation*}
\K^{s,\nu}_t(\Z^\wedge;E^\wedge) = \omega x^{\nu}H^s_b(\Z^\wedge;E^\wedge) + (1-\omega)x^{-t}H^s_{\textup{cone}}(\Z^\wedge;E^\wedge)
\end{equation*}
introduced by Schulze and discussed in his monographs on pseudodifferential operators on singular manifolds such as \cite{SchuNH,SchuWiley}. We will often just write $\K^{s,\nu}_t$ instead of using the full notation.

To prove (b) and (c) we shall take the point of view that
\begin{equation*}
{}_{\psi}(\Ext_{\lambda})_{\varphi} : C^{\infty}(U;\trb_F) \to C^{\infty}(U;C^{\infty}(\open\Z^\wedge;E^\wedge))
\end{equation*}
is a pseudodifferential operator acting from sections of the (finite-rank) bundle $\trb_F$ over $U$ to sections of an infinite-dimensional bundle over $U$ whose fiber is a function space on $\Z^\wedge$.

Disregarding the multiplication by $\varphi$ for the moment, define the symbol
\begin{equation*}
\sym(y,\eta,\lambda) : \trb_{F,y} \to C^{\infty}(\open\Z^\wedge;E^\wedge)
\end{equation*}
by
\begin{equation*}
\sym(y,\eta,\lambda)=\psi(y)\omega(x[\eta,\lambda]_y)\jmath_y.
\end{equation*}
If $\tau\in C^\infty(U;\trb_F)$ is a section, then 
\begin{equation}\label{symExtdef}
\sym(y,\eta,\lambda)[\tau(y)] = \big[(z,x) \mapsto \psi(y)\omega(x[\eta,\lambda]_y)\jmath_y\,\tau(y,z,x)\big]
\end{equation}
and we have, for some sufficiently small $\delta_0 > 0$,
\begin{equation*}
\sym(y,\eta,\lambda)[\tau(y)]  \in C^{\infty}(U\times\R^{q+1},\K^{\infty,\delta_0}_{\infty}(\Z^\wedge;E^\wedge))
\end{equation*}
for $y\in U$ and $(\eta,\lambda)\in \R^{q+1}$ on account of the nature of the elements of $\trb_F$ and the concept of smoothness for this bundle (see \cite{KrMe12a}). For the number $\delta_0$, recall the absence of boundary spectrum on the line $\Im\sigma=0$ (assumption (\ref{b-strips}) in Section~\ref{sec-SetupAssumptions}); any $0 < \delta_0 < 1$ that satisfies
$$
0 < \delta_0 < \inf\set{|\Im(\sigma)| : \Im(\sigma) < 0 \text{ and } \exists y \in \Y, (y,\sigma) \in \spec_e(A)}
$$
will work.

For any $\varrho > 0$, pull-back with respect to the diffeomorphism $\open\Z^\wedge \to \open\Z^\wedge$, $(z,x) \mapsto (z,\varrho x)$, and normalization induces the action
$$
\kappa_{\varrho}u(z,x) = \varrho u(z,\varrho x), \quad \varrho > 0,
$$
on $C^{-\infty}(\open\Z^\wedge;E^\wedge)$, see \eqref{kappaaction}. The action $\kappa_{\varrho}$ restricts to the spaces $\K^{s,\nu}_t(\Z^\wedge;E^\wedge)$ for all $s,\nu,t \in \R$, and it is a group of isometries on the reference Hilbert space $x^{-1}L^2_b(\Z^\wedge;E^\wedge)$. It also restricts to the fibers $\trb_{F,y}$ where we have $\kappa_{\varrho} = \varrho^{\gen} \in \End(\trb_F)$, see \eqref{generator}. Directly from \eqref{symExtdef} we see that
\begin{equation}\label{kappahomsym}
\sym(y,\varrho\eta,\varrho\lambda) = \kappa_{\varrho}\sym(y,\eta,\lambda)\kappa_{\varrho}^{-1} : \trb_{F,y} \to C^{\infty}(\open\Z^\wedge;E^\wedge)
\end{equation}
for all sufficiently large $|\eta|_y^2 + \lambda ^2$ and all $\varrho \geq 1$. This twisted homogeneity relation is analogous to \eqref{kappahomogeneity} for the normal family of $A$.

With the fixed $\delta$-admissible frame $\{\tau_{\mu}\}_{\mu=1}^N$ of $\trb_F$ over $U$ we quantize the symbol $\sym(y,\eta,\lambda)$ and get an operator family
\begin{equation}\label{localsigmaop}
\begin{gathered}
\sym(y,D_y,\lambda) : C_c^{\infty}(U;\trb_F) \to C^{\infty}(U;C^{\infty}(\open\Z^\wedge;E^\wedge)) \\
[\sym(y,D_y,\lambda)\tau](y) = \frac{1}{(2\pi)^q}\int_{\R^q}e^{iy\eta}\sum_{\mu=1}^N \sym(y,\eta,\lambda)[\tau_{\mu}(y)]\, \widehat{u^{\mu}}(\eta) \,d\eta
\end{gathered}
\end{equation}
for $\tau = \sum\limits_{\mu=1}^N u^{\mu}\tau_{\mu} \in C_c^{\infty}(U;\trb_F)$; observe that $[{}_{\psi}(\Ext_{\lambda})_{\varphi}](\tau) = \sym(y,D_y,\lambda)(\varphi \tau)$ for such $\tau$. The local symbol
\begin{equation}\label{localsigmasym}
\begin{gathered}
\sigma(y,\eta,\lambda) : \C^N \to C^{\infty}(\open\Z^\wedge;E^\wedge) \\
\sigma(y,\eta,\lambda)\begin{bmatrix} c^1 \\ \vdots \\ c^N \end{bmatrix} = \sum\limits_{\mu=1}^N \sym(y,\eta,\lambda)[\tau_{\mu}(y)]c^{\mu}
\end{gathered}
\end{equation}
that is associated to $\sym(y,\eta,\lambda)$ and the frame $\{\tau_{\mu}\}_{\mu=1}^N$ is $C^{\infty}$ in all variables taking values in $\L(\C^N,\K^{s,\delta_0}_t)$ for all $s,t \in \R$. It satisfies, for all $\alpha \in {\mathbb N}_0^q$ and $\beta \in {\mathbb N}_0^{q+1}$, an estimate
\begin{equation}\label{symbest}
\|\kappa_{\langle \eta,\lambda \rangle}^{-1} \bigl(D^{\alpha}_y\partial_{(\eta,\lambda)}^{\beta}\sigma(y,\eta,\lambda)\bigr) \langle \eta,\lambda \rangle^{g(y)}\|_{\L(\C^N,\K^{s,\delta_0}_t)} \lesssim \langle \eta,\lambda \rangle^{-|\beta|+\delta|\alpha|}
\end{equation}
for all $(\eta,\lambda) \in \R^{q+1}$, locally uniformly in $y \in U$. Here $g(y) \in C^{\infty}(U,\L(\C^N))$ is the local representation of the endomorphism $\gen \in \End(\trb_F)$ over $U$ with respect to the frame $\{\tau_{\mu}\}_{\mu=1}^N$, so $\langle \eta,\lambda \rangle^{g(y)} \in \L(\C^N)$ is the local representation of $\kappa_{\langle \eta,\lambda\rangle} \in \L(\trb_{F,y})$ with respect to that frame (at $y$). 

It is at this juncture where $\delta$-admissibility of the frame $\{\tau_{\mu}\}_{\mu=1}^N$ comes into play. It guarantees that we locally work with symbols of H{\"o}rmander type $1,\delta$ for some $0 < \delta \ll 1$ (which we fixed at the very beginning). The symbol estimates \eqref{symbest} follow from \eqref{kappahomsym} by differentiation as in \cite[Proposition~3.11]{KrMe12b}. The argument involves estimates on the $y$-derivatives of the local representation $\varrho^{-g(y)}$ of $\kappa_{\varrho}^{-1}$ that appears on the right in \eqref{kappahomsym} which rely on $\delta$-admissibility in a crucial manner.

Pseudodifferential operators with symbols that satisfy estimates of the kind \eqref{symbest} are continuous in
$$
H_{\textup{comp}}^{s'-g}(U;\C^N) \to {\mathcal W}_{\loc}^{s'}(U,\K^{s,\delta_0}_{t})
$$
for all $s',s,t \in \R$. This follows from untwisting the group action on the finite-rank part utilizing the calculus in \cite{KrMe12b}, combined with a boundedness result for abstract edge pseudodifferential operators with operator-valued symbols of H{\"o}rmander type $1,\delta$ in abstract wedge Sobolev spaces\footnote{For a Hilbert space $E$ equipped with a strongly continuous group action $\{\kappa_{\varrho}\}_{\varrho > 0}$ the abstract edge Sobolev space ${\mathcal W}^{s'}(\R^q,E)$ is the completion of ${\mathscr S}(\R^q,E)$ with respect to
$$
\|u\|_{{\mathcal W}^{s'}}^2 = \int_{\R^q} \langle \eta \rangle^{2s'}\|\kappa^{-1}_{\langle \eta \rangle}\widehat u(\eta)\|_{E}^2\,d\eta.
$$
This is a subspace of ${\mathscr S}'(\R^q,E)$. These spaces were introduced by Schulze, see \cite{SchuNH,SchuWiley} for further information.} proved in \cite[Theorem~4]{Kr15}. Now choose specifically $s' = \delta_0 + 1$, $s$ arbitrarily, and $t = s-\delta_0-\frac{\dim\Z^\wedge}{2}$. Then, as discussed in \cite[Appendix~A]{KrMe13},  the space ${\mathcal W}_{\textup{loc}}^{s'}(U,\K^{s,\delta_0}_t)$ is a local model space for the edge Sobolev space $x^{\delta_0}H^{s}_e(\M;E)$ near the boundary. The group action in \cite[Appendix~A]{KrMe13} was normalized differently, so our choice of $s' = \delta_0 + 1$ here corresponds to the choice $\delta_0 + 1/2$ there. Putting things together gives
$$
\Phi_*\bigl[{}_{\psi}(\Ext_{\lambda})_{\varphi}\bigr] : H^{1 + \delta_0 - \gen}(\Y;\trb_F) \to x^{\delta_0}H^{\infty}_e(\M;E).
$$
Because $\delta_0 < 1$ we have $H^{2-\gen}(\Y;\trb_F) \hookrightarrow H^{1+\delta_0-\gen}(\Y;\trb_F)$, and (b) is proved.

To prove (c) we may assume $\lambda'=0$ without loss of generality. With fixed $\lambda$ let  $c(y,\eta) = \sigma(y,\eta,\lambda) - \sigma(y,\eta,0)$. Note that
\begin{equation}\label{csymdef}
c(y,\eta) = \lambda\int_0^1 (\partial_{\lambda}\sigma)(y,\eta,\theta\lambda)\,d\theta,
\end{equation}
and that
\begin{equation*}
(\partial_{\lambda}\sigma)(y,\eta,\lambda) : \C^N \to C^{\infty}(\open\Z^\wedge;E^\wedge)
\end{equation*}
is given by
\begin{equation*}
(\partial_{\lambda}\sigma)(y,\eta,\lambda)\begin{bmatrix} c^1 \\ \vdots \\ c^N \end{bmatrix} =
\Bigl[(z,x) \mapsto \frac{\partial_{\lambda}[\eta,\lambda]_y}{[\eta,\lambda]_y}\sum\limits_{\mu=1}^N \psi(y)(x\partial_x\omega)(x[\eta,\lambda]_y)\jmath_y\tau_{\mu}(y,z,x)c^{\mu}\Bigr].
\end{equation*}
Observe that $x\partial_x\omega \in C_c^{\infty}(\R_+)$ is supported away from $x = 0$. Consequently, the symbol estimates \eqref{symbest} are better for $\partial_\lambda\sigma$ than what is stated there in the sense that the $\L(\C^N,\K^{s,\delta_0}_t)$-norms can be replaced by any $\L(\C^N,\K^{s,\nu}_t)$-norm for the full range of regularity parameters $s,\nu,t \in \R$. This implies that the symbol $c(y,\eta)$ satisfies the estimate
$$
\|\kappa_{\langle \eta \rangle}^{-1} \bigl(D^{\alpha}_y\partial_{\eta}^{\beta}c(y,\eta)\bigr) \langle \eta \rangle^{g(y)}\|_{\L(\C^N,\K^{s,\nu}_t)} \lesssim \langle \eta \rangle^{-1-|\beta|+\delta|\alpha|}
$$
for all $\alpha,\beta \in {\mathbb N}_0^q$  and all $\eta \in \R^q$, locally uniformly in $y \in U$, which yields as before that
$$
c(y,D_y) : H^{s'-g}_{\textup{comp}}(U;\C^N) \to {\mathcal W}_{\textup{loc}}^{s'+1}(U,\K^{s,\nu}_t)
$$
is continuous. For $s'=\nu=1$, $s$ arbitrary, and $t = s-\nu-\frac{\dim\Z^\wedge}{2}$ we have that ${\mathcal W}_{\textup{loc}}^{s'+1}(U,\K^{s,\nu}_t)$ is a local model space for $x^1H^s_e(\M;E)$ near the boundary. In view of \eqref{csymdef} this implies that
$$
\Phi_*\bigl[{}_{\psi}(\Ext_{\lambda})_{\varphi} - {}_{\psi}\Ext_{\varphi}\bigr] : H^{1-\gen}(\Y;\trb_F) \to x^1H^{\infty}_e(\M;E)
$$
which completes the proof of (c).
\end{proof}

\begin{lemma}\label{ExtAdvMapProps}
We have
$$
(A + \lambda^2) \circ \Ext_{\lambda} : H^{2-\gen}(\Y;\trb_F) \to x^{-1}H^{\infty}_e(\M;E)
$$
for all $\lambda \in \R$.
\end{lemma}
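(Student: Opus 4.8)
The plan is to carry out the analysis of $(A+\lambda^2)\circ\Ext_\lambda$ locally near the boundary within the operator-valued pseudodifferential framework already used in the proof of Lemma~\ref{ExtBasicMapProps}, the new input being the defining property $\jmath_y\tau\in\ker A_\wedge(\pmb 0_y)$ of the trace bundle, which cancels the leading $x^{-2}$-singularity of $A$. Since $\Ext_\lambda\tau$ is smooth on $\open\M$ and supported near $\N$, and $A+\lambda^2$ maps $\dot C^\infty(\M;E)$ into itself, it suffices by a partition of unity to show, for each coordinate patch $U$ as in Section~\ref{sec-ExtensionOperator}, that $\Phi^*(A+\lambda^2)\Phi_*\circ{}_\psi(\Ext_\lambda)_\varphi$ maps $H^{2-\gen}(U;\trb_F)$ into $x^{-1}H^\infty_e(\M;E)$ near $\wp^{-1}(U)$. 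Writing $P=x^2A\in\Diff^2_e(\M;E)$, near $\N$ we have $\Phi^*(A+\lambda^2)\Phi_*=x^{-2}\bigl(\Phi^*P\Phi_*+(\lambda x)^2\bigr)$, which, regarded as a pseudodifferential operator in $(y,D_y)$ with parameter $\lambda$ and operator-valued symbol $p(y,\eta,\lambda)$ (a differential operator in $(x,z)$), has complete symbol a polynomial of degree $\le 2$ in $(\eta,\lambda)$; by construction of the normal family its $\eta$-leading, frozen-coefficient part is $A_\wedge(\pmb\eta)+\lambda^2$ with $\pmb\eta=\sum_k\eta_k\,dy_k$, while every other term carries a prefactor of order at most $x^{-1}$. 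As $P$ is a differential operator the Leibniz expansion for the composite is a finite sum, and, modulo symbols valued in $\dot C^\infty$, the complete symbol of $\Phi^*(A+\lambda^2)\Phi_*\circ{}_\psi(\Ext_\lambda)_\varphi$ is
\[
\tilde\sigma(y,\eta,\lambda)=\sum_{|\gamma|\le 2}\frac{1}{\gamma!}\,\bigl(\partial_\eta^\gamma p(y,\eta,\lambda)\bigr)\circ D_y^\gamma\bigl(\varphi(y)\,\sym(y,\eta,\lambda)\bigr),
\]
with $\sym(y,\eta,\lambda)=\psi(y)\omega(x[\eta,\lambda]_y)\jmath_y$ as in the proof of Lemma~\ref{ExtBasicMapProps}.

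The decisive term is the $\gamma=0$ contribution with $p$ replaced by the normal family, $(A_\wedge(\pmb\eta)+\lambda^2)\circ\sym(y,\eta,\lambda)$. Using \eqref{kappahomogeneity}, the homogeneity of $\lambda^2$, and the twisted homogeneity \eqref{kappahomsym} of $\sym$, one computes that for $R:=[\eta,\lambda]_y$ large
\[
\kappa_R^{-1}\bigl[(A_\wedge(\pmb\eta)+\lambda^2)\circ\sym(y,\eta,\lambda)\bigr]\kappa_R = R^2\,\bigl(A_\wedge(\overline{\pmb\eta})+\overline\lambda^{\,2}\bigr)\circ\bigl(\psi(y)\,\omega(x)\,\jmath_y\,\cdot\,\bigr),
\]
where $(\overline{\pmb\eta},\overline\lambda)=(\pmb\eta,\lambda)/R$ ranges over the (compact) unit cosphere bundle of $T^*\Y\times\R$. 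Decomposing $A_\wedge(\overline{\pmb\eta})=A_\wedge(\pmb 0_y)+B(\overline{\pmb\eta})+C(\overline{\pmb\eta})$ as in Section~\ref{sec-SetupAssumptions}: since $\jmath_y\tau\in\ker A_\wedge(\pmb 0_y)$ for $\tau\in\trb_{F,y}$, the term $A_\wedge(\pmb 0_y)(\omega(x)\jmath_y\tau)=[A_\wedge(\pmb 0_y),\omega(x)]\jmath_y\tau$ is supported away from $x=0$; $C(\overline{\pmb\eta})+\overline\lambda^{\,2}$ is a bundle endomorphism; and $B(\overline{\pmb\eta})\in x^{-1}\Diff^1_b$, so $B(\overline{\pmb\eta})(\omega(x)\jmath_y\tau)\in\K^{\infty,\delta_0-1}_\infty$, where $\delta_0>0$ is the number from the proof of Lemma~\ref{ExtBasicMapProps}. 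Thus the $x^{-2}$ has been cancelled by the kernel property, and $(A_\wedge(\overline{\pmb\eta})+\overline\lambda^{\,2})\circ(\omega(x)\jmath_y\,\cdot\,)$ takes values in $\K^{\infty,\delta_0-1}_t$ for all $t$, uniformly over the unit cosphere bundle and $y$ in compacta. All remaining Leibniz terms, and the non-principal part of $p$, carry a prefactor of order at most $x^{-1}$ and hence contribute at weight $\ge\delta_0-1>-1$.

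Consequently $\tilde\sigma$ is an operator-valued symbol of order $2$ and H\"ormander type $(1,\delta)$ taking values in $\L(\C^N,\K^{s,\delta_0-1}_t)$ for all $s,t$, the $y$-derivatives being controlled via $\delta$-admissibility of the frame $\{\tau_\mu\}_{\mu=1}^N$ exactly as in the proof of Lemma~\ref{ExtBasicMapProps} (cf.\ \cite[Proposition~3.11]{KrMe12b}); explicitly,
\[
\bigl\|\kappa_{\langle\eta,\lambda\rangle}^{-1}\bigl(D_y^\alpha\partial_{(\eta,\lambda)}^\beta\tilde\sigma(y,\eta,\lambda)\bigr)\langle\eta,\lambda\rangle^{g(y)}\bigr\|_{\L(\C^N,\K^{s,\delta_0-1}_t)}\lesssim\langle\eta,\lambda\rangle^{2-|\beta|+\delta|\alpha|}.
\]
By the boundedness theorem \cite[Theorem~4]{Kr15}, combined with the untwisting of the group action on the finite-rank part from \cite{KrMe12b}, $\tilde\sigma(y,D_y,\lambda):H^{s'-g}_{\textup{comp}}(U;\C^N)\to\mathcal W^{s'-2}_{\loc}(U,\K^{s,\delta_0-1}_t)$ for all $s',s,t$. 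Choosing $s'=2$, $s$ arbitrary, and $t=s+1-\frac{\dim\Z^\wedge}{2}$, the space $\mathcal W^0_{\loc}(U,\K^{s,\delta_0-1}_t)\hookrightarrow\mathcal W^0_{\loc}(U,\K^{s,-1}_t)$ is a local model for $x^{-1}H^s_e(\M;E)$ near the boundary (as in \cite[Appendix~A]{KrMe13}; note $\delta_0-1>-1$), while $H^{2-g}(U;\C^N)$ corresponds to $H^{2-\gen}(\Y;\trb_F)$. Hence $\Phi_*\bigl[\Phi^*(A+\lambda^2)\Phi_*\circ{}_\psi(\Ext_\lambda)_\varphi\bigr]:H^{2-\gen}(\Y;\trb_F)\to x^{-1}H^s_e(\M;E)$ for every $s$, and summing over the partition of unity, together with the harmless $\dot C^\infty$-contribution away from $\N$, proves the lemma.

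The main obstacle is obtaining the improved weight $\delta_0-1$ in the symbol estimate in place of the weight $\delta_0-2$ that a generic second-order wedge operator would produce: this rests entirely on the cancellation $A_\wedge(\pmb 0_y)\jmath_y\tau=0$ built into the trace bundle, together with the observation that the only genuinely $x^{-2}$-term of $A_\wedge(\pmb\eta)+\lambda^2$ is $A_\wedge(\pmb 0_y)$, all other terms --- in particular $B(\pmb\eta)$ --- being already $O(x^{-1})$. The rest of the argument is a parameter-dependent reprise of the operator-valued pseudodifferential bookkeeping already carried out for Lemma~\ref{ExtBasicMapProps}.
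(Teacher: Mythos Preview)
Your proof is correct and follows essentially the same strategy as the paper's: localize, view $(A+\lambda^2)\circ{}_\psi(\Ext_\lambda)_\varphi$ as a pseudodifferential operator in $y$ with operator-valued symbol, take the finite Leibniz expansion, and exploit the defining property $A_\wedge(\pmb 0_y)\jmath_y\tau=0$ of the trace bundle to cancel the only genuine $x^{-2}$-term so that the composite symbol takes values in $\L(\C^N,\K^{s,-1}_t)$ with order $2$, whence the mapping property follows from the boundedness theorem in \cite{Kr15}. The only organizational difference is that you split off the full normal family $A_\wedge(\pmb\eta)+\lambda^2$ first (via the $x$-Taylor expansion of the coefficients) and then decompose it as $A_\wedge(\pmb 0_y)+B(\pmb\eta)+C(\pmb\eta)$, whereas the paper separates $c_0=a_0\sigma+a_1\sigma$ by degree in $\eta$ first and then Taylor-expands $a_0(y)={}^b\!A_y+\tilde a_0(y)$; these are equivalent rearrangements of the same computation.
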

\begin{proof}
We prove the lemma by carefully investigating the structure of the composition
\begin{equation}\label{AlExt}
(A + \lambda^2)\circ {}_{\psi}(\Ext_{\lambda})_{\varphi} : C^{-\infty}(U;\trb_F) \to C^{\infty}(U\times\open\Z^\wedge;E^\wedge).
\end{equation}
This composition is to be interpreted as follows. The functions in the range of the operator $\sym(y,D_y,\lambda)$ in \eqref{localsigmaop} all have support contained in a fixed set $K\times\Z\times[0,c_0]$ with some compact set $K \Subset U$ (depending on the support of $\psi$) and some small $c_0 > 0$ (depending on the support of the cut-off function $\omega$). We identify sections of $E$ on $\M$ that are supported near the boundary $\N$ with sections of $E^\wedge$ on $\N^\wedge$ using pull-back and push-forward with respect to $\Phi$, and in this sense the composition \eqref{AlExt} is to be understood. 

The proof relies on elaborating further on the point of view taken in the proof of Lemma~\ref{ExtBasicMapProps} that the operator
$$
{}_{\psi}(\Ext_{\lambda})_{\varphi} : C^{\infty}(U;\trb_F) \to C^{\infty}(U;C^{\infty}(\open\Z^\wedge;E^\wedge))
$$
is a pseudodifferential operator with an operator-valued symbol acting from sections of the (finite-rank) bundle $\trb_F$ over $U$ to sections of an infinite-dimensional bundle over $U$ whose fiber is a function space on $\Z^\wedge$. Recall that
$$
[{}_{\psi}(\Ext_{\lambda})_{\varphi}](\tau) = \sym(y,D_y,\lambda)(\varphi \tau)
$$
for $\tau \in C^{\infty}(U;\trb_F)$ with the operator $\sym(y,D_y,\lambda)$ in \eqref{localsigmaop}. Its local symbol with respect to the fixed $\delta$-admissible frame $\{\tau_{\mu}\}_{\mu=1}^N$ is $\sigma(y,\eta,\lambda)$ given by \eqref{localsigmasym}, and it satisfies the symbol estimates \eqref{symbest}.

Making use of locality we can disregard the dependence of the coefficients of $A + \lambda^2$ in \eqref{AlExt} outside of the set $K\times\Z\times[0,c_0]$ and consider without loss of generality
\begin{gather*}
A + \lambda^2 : C_c^{\infty}(U;C^{\infty}(\open\Z^\wedge;E^\wedge)) \to C_c^{\infty}(U;C^{\infty}(\open\Z^\wedge;E^\wedge)), \\
A + \lambda^2 = x^{-2}\sum_{k+|\beta| \leq 2} a_{k,\beta}(y,x) (xD_y)^{\beta}(xD_x)^k + \lambda^2,
\end{gather*}
where $a_{k,\beta}(y,x) \in C^{\infty}(U\times\overline{\R}_+,\Diff^{2-|\beta|-k}(\Z;E_{\Z}))$ with $a_{k,\beta}(y,x) = 0$ for $(y,x) \notin K\times[0,c_0]$. We have
$$
A + \lambda^2 = \sum_{|\beta| \leq 2} a_{\beta}(y)D_y^{\beta} + \lambda^2,
$$
where
\begin{equation}\label{abeta}
a_{\beta}(y) = x^{-2+|\beta|}\sum_{k=0}^{2-|\beta|}a_{k,\beta}(y,x)(xD_x)^k \in x^{-2+|\beta|}\Diff_b^{2-|\beta|}(\Z^\wedge;E^\wedge).
\end{equation}
Let
\begin{equation}\label{absymbol}
a(y,\eta,\lambda) = \sum_{|\beta| \leq 2} a_{\beta}(y)\eta^{\beta} + \lambda^2 : C^{\infty}(\open\Z^\wedge;E^\wedge) \to C^{\infty}(\open\Z^\wedge;E^\wedge)
\end{equation}
be the boundary symbol associated with $A + \lambda^2$. The composition
$$
(A + \lambda^2)\circ \sym(y,D_y,\lambda) : C_c^{\infty}(U;\trb_F) \to C^{\infty}(U;C^{\infty}(\open\Z^\wedge;E^\wedge))
$$
is a pseudodifferential operator whose local symbol with respect to the chosen frame $\{\tau_{\mu}\}_{\mu=1}^N$ of $\trb_F$ is
\begin{equation}\label{asigmaexpand}
\begin{gathered}
(a \# \sigma)(y,\eta,\lambda) = c_0(y,\eta,\lambda) + c_1(y,\eta,\lambda) + c_2(y,\eta,\lambda) : \C^N \to C^{\infty}(\open\Z^\wedge;E^\wedge) \\
c_j = \sum\limits_{|\alpha| = j} \frac{1}{\alpha!} \bigl(\partial^{\alpha}_{\eta}a\bigr)\bigl(D_y^{\alpha}\sigma\bigr), \quad j=0,1,2.
\end{gathered}
\end{equation}
We will proceed to show below that each $c_j(y,\eta,\lambda)$ satisfies for all $s,t \in \R$ and all $\alpha \in {\mathbb N}_0^q$ and $\beta \in {\mathbb N}_0^{q+1}$ the estimate
\begin{equation}\label{cjestimate}
\|\kappa_{\langle \eta,\lambda \rangle}^{-1} \bigl(D_y^{\alpha}\partial^{\beta}_{(\eta,\lambda)}c_j(y,\eta,\lambda)\bigr) \langle \eta,\lambda \rangle^{g(y)}\|_{\L(\C^N,\K^{s,-1}_t)} \lesssim \langle \eta,\lambda \rangle^{2-j(1-\delta)-|\beta|+\delta|\alpha|}
\end{equation}
for all $(\eta,\lambda) \in \R^{q+1}$, locally uniformly in $y \in U$. Consequently,
$$
\|\kappa_{\langle \eta,\lambda \rangle}^{-1} \bigl(D_y^{\alpha}\partial^{\beta}_{(\eta,\lambda)}(a\#\sigma)(y,\eta,\lambda)\bigr) \langle \eta,\lambda \rangle^{g(y)}\|_{\L(\C^N,\K^{s,-1}_t)} \lesssim \langle \eta,\lambda \rangle^{2-|\beta|+\delta|\alpha|},
$$
which implies that
$$
(a\#\sigma)(y,D_y,\lambda) : H^{s'-g}_{\textup{comp}}(U;\C^{N}) \to {\mathcal W}^{s'-2}_{\textup{loc}}(U;\K^{s,-1}_t)
$$
for all $s',s,t \in \R$. Specifically for $s'=2$, $s$ arbitrary, and $t=s-(-1)-\frac{\dim\Z^\wedge}{2}$ the space ${\mathcal W}^{s'-2}_{\textup{loc}}(U;\K^{s,-1}_t)$ is a local model space for $x^{-1}H^s_e(\M;E)$ near the boundary, and consequently
$$
(A + \lambda^2)\circ {}_{\psi}(\Ext_{\lambda})_{\varphi} : H^{2-\gen}(\Y;\trb_F) \to x^{-1}H^{\infty}_e(\M;E)
$$
as desired.

It remains to prove the estimates \eqref{cjestimate}. To this end, note that each $a_{\beta}(y)$ in \eqref{abeta} belongs to
$$
C^{\infty}(U,\L(\K^{s,\nu}_t,\K^{s-(2-|\beta|),\nu-(2-|\beta|)}_{t}))
$$
for all $s,\nu,t \in \R$, and for all $\alpha \in {\mathbb N}_0^q$ we have
\begin{equation}\label{EstPartsA}
\|\kappa_{\langle \eta,\lambda \rangle}^{-1}\bigl(D^{\alpha}_ya_{\beta}(y)\bigr)\kappa_{\langle \eta,\lambda \rangle}\|_{\L(\K^{s,\nu}_t,\K^{s-(2-|\beta|),\nu-(2-|\beta|)}_{t})} \lesssim \langle \eta,\lambda \rangle^{2-|\beta|}
\end{equation}
for all $(\eta,\lambda) \in \R^{q+1}$, locally uniformly in $y \in U$.  These estimates show that for all $\alpha \in {\mathbb N}_0^q$ and all $\beta \in {\mathbb N}_0^{q+1}$ we have
$$
\|\kappa_{\langle \eta,\lambda \rangle}^{-1}\bigl(D^{\alpha}_y\partial^{\beta}_{(\eta,\lambda)}a(y,\eta,\lambda)\bigr)\kappa_{\langle \eta,\lambda \rangle}\|_{\L(\K^{s,0}_t,\K^{s-2+|\beta|,-2+|\beta|}_{t})} \lesssim \langle \eta,\lambda \rangle^{2-|\beta|}.
$$
Combining the latter estimate with the symbol estimates \eqref{symbest} for $\sigma(y,\eta,\lambda)$ then yields
$$
\|\kappa_{\langle \eta,\lambda \rangle}^{-1} \bigl(D_y^{\alpha}\partial^{\beta}_{(\eta,\lambda)}c_j(y,\eta,\lambda)\bigr) \langle \eta,\lambda \rangle^{g(y)}\|_{\L(\C^N,\K^{s,-1}_t)} \lesssim \langle \eta,\lambda \rangle^{2-j(1-\delta)-|\beta|+\delta|\alpha|}
$$
for $j = 1,2$. Observe that the target space can be chosen to be $\K^{s,-1}_t$ for $j=1,2$ because the symbol $a(y,\eta,\lambda)$ is differentiated at least once with respect to $\eta \in \R^q$ in each occurrence in the formula \eqref{asigmaexpand} for $c_1$ and $c_2$.

To prove \eqref{cjestimate} for $c_0(y,\eta,\lambda) = a(y,\eta,\lambda)\sigma(y,\eta,\lambda)$ we expand $a(y,\eta,\lambda)$ as in \eqref{absymbol} and write
\begin{equation}\label{c01}
\begin{gathered}
c_0(y,\eta,\lambda) = a_0(y)\sigma(y,\eta,\lambda) + a_1(y,\eta,\lambda)\sigma(y,\eta,\lambda), \\
a_1(y,\eta,\lambda) = \sum_{1 \leq |\beta| \leq 2} a_{\beta}(y)\eta^{\beta} + \lambda^2.
\end{gathered}
\end{equation}
The estimates \eqref{EstPartsA} imply that $a_1(y,\eta,\lambda)$ satisfies the symbol estimates
$$
\|\kappa_{\langle \eta,\lambda \rangle}^{-1}\bigl(D^{\alpha}_y\partial^{\beta}_{(\eta,\lambda)}a_1(y,\eta,\lambda)\bigr)\kappa_{\langle \eta,\lambda \rangle}\|_{\L(\K^{s,0}_t,\K^{s-2,-1}_{t})} \lesssim \langle \eta,\lambda \rangle^{2-|\beta|},
$$
and consequently
\begin{equation}\label{c02}
\|\kappa_{\langle \eta,\lambda \rangle}^{-1} \bigl((D_y^{\alpha}\partial^{\beta}_{(\eta,\lambda)}a_1\sigma)(y,\eta,\lambda)\bigr) \langle \eta,\lambda \rangle^{g(y)}\|_{\L(\C^N,\K^{s,-1}_t)} \lesssim \langle \eta,\lambda \rangle^{2-|\beta|+\delta|\alpha|}.
\end{equation}
Next consider $a_0(y) \in x^{-2}\Diff_b^2(\Z^\wedge;E^\wedge)$ as given by \eqref{abeta}. Taylor expansion of the coefficients $a_{k,0}(y,x)$ with respect to $x$ at $x=0$ gives
\begin{equation}\label{c03}
a_0(y) = \bA_y + \tilde{a}_0(y),
\end{equation}
where $\tilde{a}_0(y) \in x^{-1}\Diff_b^2(\Z^\wedge;E^\wedge)$, and $\bA_y = A_\wedge(0)$ is the indicial operator, i.e., the normal operator $A_\wedge(0)$ at $0 \in T^*_y\Y$. The operator $\tilde{a}_0(y)$ satisfies the estimates
$$
\|\kappa_{\langle \eta,\lambda \rangle}^{-1}\bigl(D^{\alpha}_y\tilde{a}_{0}(y)\bigr)\kappa_{\langle \eta,\lambda \rangle}\|_{\L(\K^{s,0}_{t+1},\K^{s-2,-1}_{t})} \lesssim \langle \eta,\lambda \rangle^{1}.
$$
Combined with \eqref{symbest} we get
\begin{equation}\label{c04}
\|\kappa_{\langle \eta,\lambda \rangle}^{-1} \bigl((D_y^{\alpha}\partial^{\beta}_{(\eta,\lambda)}\tilde{a}_0\sigma)(y,\eta,\lambda)\bigr) \langle \eta,\lambda \rangle^{g(y)}\|_{\L(\C^N,\K^{s,-1}_t)} \lesssim \langle \eta,\lambda \rangle^{1-|\beta|+\delta|\alpha|}.
\end{equation}
It remains to analyze $\bA_y\sigma(y,\eta,\lambda)$. By \eqref{symExtdef} we have
$$
\sym(y,\eta,\lambda)[\tau(y)] = [(z,x) \mapsto \psi(y)\omega(x[\eta,\lambda]_y)\jmath_y\tau(y,z,x)]
$$
for $\tau \in C^{\infty}(U;\trb_F)$. By definition of the bundle $\trb_F$ we have $\bA_y\jmath_y\tau \equiv 0$ for all $\tau \in C^{\infty}(U;\trb_F)$. Consequently,
\begin{align*}
\bA_y \sym(y,\eta,\lambda)[\tau(y)] &= [(z,x) \mapsto \psi(y)[\bA_y,\omega(x[\eta,\lambda]_y)]\tilde{\omega}(x[\eta,\lambda]_y)\jmath_y\tau(y,z,x)] \\
&= [\bA_y,\omega(x[\eta,\lambda]_y)] \Bigl([(z,x) \mapsto \psi(y)\tilde{\omega}(x[\eta,\lambda]_y)\jmath_y\tau(y,z,x)]\Bigr),
\end{align*}
where $\tilde{\omega} \in C_c^{\infty}(\overline{\R}_+)$ with $\tilde{\omega} \equiv 1$ in a neighborhood of the support of $\omega$. The commutator
$$
q(y,\eta,\lambda) = [\bA_y,\omega(x[\eta,\lambda]_y)] \in C^{\infty}(U\times\R^{q+1},\L(\K^{s,\nu}_t,\K^{s-2,\nu'}_{t'}))
$$
for all $s,\nu,\nu',t,t' \in \R$, and the symbol estimates
$$
\|\kappa_{\langle \eta,\lambda \rangle}^{-1}\bigl(D^{\alpha}_y\partial^{\beta}_{(\eta,\lambda)}q(y,\eta,\lambda)\bigr)\kappa_{\langle \eta,\lambda \rangle}\|_{\L(\K^{s,\nu}_t,\K^{s-2,\nu'}_{t'})} \lesssim \langle \eta,\lambda \rangle^{2-|\beta|}
$$
hold. We have $\bA_y\sigma(y,\eta,\lambda) = q(y,\eta,\lambda)\tilde{\sigma}(y,\eta,\lambda)$, where $\tilde{\sigma}(y,\eta,\lambda)$ is defined based on the cut-off function $\tilde{\omega}$ just like $\sigma(y,\eta,\lambda)$ is defined based on the cut-off function $\omega$. In particular, the estimates \eqref{symbest} also hold for $\tilde{\sigma}(y,\eta,\lambda)$ en lieu of $\sigma(y,\eta,\lambda)$. Combining these with the estimates on $q(y,\eta,\lambda)$ gives
\begin{equation}\label{c05}
\|\kappa_{\langle \eta,\lambda \rangle}^{-1} \big(D_y^{\alpha}\partial^{\beta}_{(\eta,\lambda)}\bA_y\sigma(y,\eta,\lambda)\big) \langle \eta,\lambda \rangle^{g(y)}\|_{\L(\C^N,\K^{s,-1}_t)} \lesssim \langle \eta,\lambda \rangle^{2-|\beta|+\delta|\alpha|}.
\end{equation}
In conclusion, starting with the representation \eqref{c01} for $c_0(y,\eta,\lambda)$, further broken up according to \eqref{c03}, we have obtained symbol estimates \eqref{c02}, \eqref{c04}, and \eqref{c05} which together give the desired estimates \eqref{cjestimate} for $c_0(y,\eta,\lambda)$. This completes the proof of the lemma.
\end{proof}

Let $S^{m}_{1,\delta}(U\times\R^{q+1},(\C^N,-g),(\K^{s,-1}_t,\kappa_{\varrho}))$, or merely $S^{m}_{1,\delta}$ for short, denote the symbol space of all smooth $\L(\C^N,\K^{s,-1}_t)$-valued functions $b(y,\eta,\lambda)$ on $U\times\R^{q+1}$ that satisfy for all $\alpha \in {\mathbb N}_0^q$ and all $\beta \in {\mathbb N}_0^{q+1}$ the estimate
\begin{equation}\label{bdrysymbest}
\|\kappa_{\langle \eta,\lambda \rangle}^{-1} \bigl(D_y^{\alpha}\partial^{\beta}_{(\eta,\lambda)}b(y,\eta,\lambda)\bigr) \langle \eta,\lambda \rangle^{g(y)}\|_{\L(\C^N,\K^{s,-1}_t)} \lesssim \langle \eta,\lambda \rangle^{m-|\beta|+\delta|\alpha|}
\end{equation}
for all $(\eta,\lambda) \in \R^{q+1}$, locally uniformly in $y \in U$. Using the notation of the previous proofs we have shown in Lemma~\ref{ExtAdvMapProps} that
$$
(a\#\sigma)(y,\eta,\lambda) \in S^{2}_{1,\delta}(U\times\R^{q+1},(\C^N,-g),(\K^{s,-1}_t,\kappa_{\varrho})).
$$
More precisely, we have $(a\#\sigma)(y,\eta,\lambda) = c_0(y,\eta,\lambda)$ mod $S^{1+\delta}_{1,\delta}$ by \eqref{asigmaexpand} and the estimates \eqref{cjestimate}. We further analyzed $c_0(y,\eta,\lambda)$ by breaking it up as in \eqref{c01} as
$$
c_0(y,\eta,\lambda) = a_0(y)\sigma(y,\eta,\lambda) + a_1(y,\eta,\lambda)\sigma(y,\eta,\lambda),
$$
and have shown that $a_0(y)\sigma(y,\eta,\lambda) = \bA_y\sigma(y,\eta,\lambda)$ modulo $S^{1+\delta}_{1,\delta}$, see the estimates \eqref{c04} and \eqref{c05}. We accomplished the latter by using a Taylor expansion with respect to $x$ at $x=0$ of the coefficients $a_{k,0}(y,x)$ in $a_0(y)$ given by \eqref{abeta}. Using a Taylor expansion on the coefficients in \eqref{abeta} also for $\beta \neq 0$ and arguing as as we did in the proof of Lemma~\ref{ExtAdvMapProps} for $a_0(y)\sigma(y,\eta,\lambda)$, but for $a_1(y,\eta,\lambda)\sigma(y,\eta,\lambda)$ instead, then reveals that
$$
c_0(y,\eta,\lambda) = \bigl(A_\wedge(\eta) + \lambda^2\bigr)\sigma(y,\eta,\lambda)
$$
modulo $S^{1+\delta}_{1,\delta}$, and thus
$$
(a\#\sigma)(y,\eta,\lambda) = \bigl(A_\wedge(\eta) + \lambda^2\bigr)\sigma(y,\eta,\lambda)
$$
modulo $S^{1+\delta}_{1,\delta}$. The twisted homogeneity relations \eqref{kappahomogeneity} for $A_\wedge(\eta)$ and \eqref{kappahomsym} for $\sym(y,\eta,\lambda)$ show that
\begin{multline*}
\bigl(A_\wedge(\varrho\eta) + (\varrho\lambda)^2\bigr)\sym(y,\varrho\eta,\varrho\lambda) =
\varrho^2\kappa_{\varrho}\bigl(A_\wedge(\eta) + \lambda^2\bigr)\sym(y,\eta,\lambda)\kappa_{\varrho}^{-1} \\
: \trb_{F,y} \to C^{\infty}(\open\Z^\wedge;E^\wedge)
\end{multline*}
for all sufficiently large $|\eta|_y^2+\lambda^2$ and all $\varrho \geq 1$. Setting
\begin{equation*}
\sym_\wedge(y,\eta,\lambda)[\tau] = \Bigl[(z,x) \mapsto \psi(y)\omega\Bigl(x\sqrt{|\eta|_y^2 +\lambda^2}\Bigr)\jmath_y\, \tau(y,z,x)\Bigr]
\end{equation*}
for $y \in U$, $(0,0) \neq (\eta,\lambda) \in \R^{q+1}$, and $\tau \in C^{\infty}(U;\trb_F)$ in analogy with the  definition of the full symbol $\sym(y,\eta,\lambda)$ in \eqref{symExtdef}, we obtain the twisted homogeneous principal symbol $\sym_\wedge(y,\eta,\lambda)$ associated with $\sym(y,\eta,\lambda)$. This principal symbol satisfies
$$
\sym_\wedge(y,\varrho\eta,\varrho\lambda) = \kappa_{\varrho}\sym_\wedge(y,\eta,\lambda)\kappa_{\varrho}^{-1}
$$
for all $y \in U$, $(\eta,\lambda) \in \R^{q+1}\minus \{(0,0)\}$. Thus also $(a\#\sigma)(y,\eta,\lambda)$ has a twisted homogeneous principal symbol $(a\#\sigma)_\wedge(y,\eta,\lambda)$ given by
$$
(a\#\sigma)_\wedge(y,\eta,\lambda)\begin{bmatrix} c^1 \\ \vdots \\ c^N \end{bmatrix} =
\bigl(A_\wedge(\eta) + \lambda^2\bigr)\sym_\wedge(y,\eta,\lambda)\Bigl[\sum\limits_{\mu=1}^N c^{\mu}\tau_{\mu}(y)\Bigr]
$$
for $\begin{bmatrix} c^1 & \cdots & c^{N} \end{bmatrix}^\dag \in \C^N$, and the principal symbol $(a\#\sigma)_\wedge(y,\eta,\lambda)$ determines $(a\#\sigma)(y,\eta,\lambda)$ modulo $S^{1+\delta}_{1,\delta}$.

\begin{definition}
The normal family of the parameter-dependent family of extension operators $\Ext_{\lambda} : C^{-\infty}(\Y;\trb_F) \to H^{\infty}_e(\M;E)$ is the family
\begin{equation*}
\Ext_\wedge(\pmb\eta,\lambda)=\omega(x\sqrt{g_\Y(\pmb\eta)+\lambda^2})\,\jmath_y : \trb_{F,y} \to C^{\infty}(\Z^\wedge_y;E_{\Z^\wedge_y}),\quad(\pmb\eta,\lambda) \in \bigl(T_y^*\Y\times\R\bigr)\minus 0.
\end{equation*}
\end{definition}

By the previous lemmas and remarks, the composition
$$
(A+\lambda^2)\circ\Ext_{\lambda} : H^{2-\gen}(\Y;\trb_F) \to x^{-1}H^{\infty}_e(\M;E)
$$
is, after appropriate trivialization of the bundles locally near the boundary, given by pseudodifferential operators with parameter-dependent operator valued symbols of class $S^{2}_{1,\delta}(U\times\R^{q+1},(\C^N,-g),(\K^{s,-1}_t,\kappa_{\varrho}))$, and the normal family
$$
(A_\wedge(\pmb\eta) + \lambda^2)\circ \Ext_\wedge(\pmb\eta,\lambda) : \trb_{F,y} \to C^{\infty}(\Z^\wedge_y;E_{\Z^\wedge_y})
$$
determines these boundary symbols modulo $S^{1+\delta}_{1,\delta}(U\times\R^{q+1},(\C^N,-g),(\K^{s,-1}_t,\kappa_{\varrho}))$.

\begin{proposition}\label{Parametrix}
Under the assumptions (\ref{SymmetricSemibounded})-(\ref{AhatInvertible}) in Section~\ref{sec-SetupAssumptions} on the operator $A \in x^{-2}\Diff^2_e(\M;E)$, the operator family
\begin{equation}\label{paramopfam}
\begin{bmatrix} A + \lambda^2 & (A+\lambda^2)\circ\Ext_{\lambda} \end{bmatrix} :
\begin{array}{c} x^1H^2_e(\M;E) \\ \oplus \\ H^{2-\gen}(\Y;\trb_F) \end{array} \to x^{-1}L^2_b(\M;E)
\end{equation}
is invertible for all sufficiently large $|\lambda| \gg 0$.
\end{proposition}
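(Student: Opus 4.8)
The plan is to construct a two-sided parametrix for the row operator
\begin{equation*}
\mathcal{A}_\lambda=\begin{bmatrix} A+\lambda^2 & (A+\lambda^2)\circ\Ext_\lambda\end{bmatrix}
\end{equation*}
in the parameter-dependent edge pseudodifferential calculus and then to absorb the resulting remainders in a Neumann series once $|\lambda|$ is large. The ellipticity input is already available: away from the boundary $A+\lambda^2$ is a classical parameter-elliptic operator by \eqref{wsymparamell} (the second column of $\mathcal{A}_\lambda$ has range supported near $\N$ and contributes nothing to the interior symbol), so it possesses a parameter-dependent interior parametrix on $\open\M$; and near the boundary, by the proof of Lemma~\ref{ExtAdvMapProps} and the discussion following it, $(A+\lambda^2)\circ{}_{\psi}(\Ext_{\lambda})_{\varphi}$ is, in the chosen trivializations, a pseudodifferential operator with operator-valued symbol in $S^{2}_{1,\delta}$ that is determined modulo $S^{1+\delta}_{1,\delta}$ by the normal family $(A_\wedge(\pmb\eta)+\lambda^2)\circ\Ext_\wedge(\pmb\eta,\lambda)$.

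The key step is to establish invertibility of the normal family of $\mathcal{A}_\lambda$, namely
\begin{equation*}
\mathcal{A}_\wedge(\pmb\eta,\lambda)=\begin{bmatrix} A_\wedge(\pmb\eta)+\lambda^2 & \bigl(A_\wedge(\pmb\eta)+\lambda^2\bigr)\circ\Ext_\wedge(\pmb\eta,\lambda)\end{bmatrix}:\begin{array}{c}\Dom_{\wedge,\min}\\ \oplus\\ \trb_{F,y}\end{array}\to x^{-1}L^2_b(\Z_y^\wedge;E_{\Z_y^\wedge})
\end{equation*}
for $(\pmb\eta,\lambda)\in\bigl(T^*\Y\times\R\bigr)\minus 0$, $y=\pi_\Y\pmb\eta$. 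I would factor $\mathcal{A}_\wedge(\pmb\eta,\lambda)=\bigl(A_{\wedge,F}(\pmb\eta)+\lambda^2\bigr)\circ J$ with $J(u,c)=u+\Ext_\wedge(\pmb\eta,\lambda)c$. Because $\Ext_\wedge(\pmb\eta,\lambda)c=\omega\bigl(x\sqrt{g_\Y(\pmb\eta)+\lambda^2}\bigr)\jmath_yc$ differs from $\omega\jmath_yc$ by a section supported away from $x=0$, hence lying in $\Dom_{\wedge,\min}$, the decomposition $\Dom_{\wedge,F}=\Dom_{\wedge,\min}\oplus\omega\trb_{F,y}$ of \cite{GiMe01} makes $J:\Dom_{\wedge,\min}\oplus\trb_{F,y}\to\Dom_{\wedge,F}$ an isomorphism, while $A_{\wedge,F}(\pmb\eta)+\lambda^2:\Dom_{\wedge,F}\to x^{-1}L^2_b$ is an isomorphism precisely by \eqref{normalfamparamell}. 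Thus $\mathcal{A}_\wedge(\pmb\eta,\lambda)$ is invertible for $(\pmb\eta,\lambda)\ne 0$, and the twisted homogeneity relations \eqref{kappahomogeneity}, \eqref{kappahomsym} together with $\kappa_\varrho=\varrho^\gen$ on $\trb_F$ force its inverse $\mathcal{B}_\wedge(\pmb\eta,\lambda)$, with components $P_\wedge$ and $Q_\wedge$, to be $\kappa$-twisted homogeneous of degree $-2$; from this one reads off symbol estimates of the form \eqref{bdrysymbest} of order $-2$, so that $\mathcal{B}_\wedge$ lies in the symbol class opposite to the one carrying $\mathcal{A}_\lambda$.

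With this in hand the parametrix is assembled in the standard way: one quantizes, in the calculus of \cite{KrMe12b} with the boundedness theorem of \cite{Kr15}, an operator-valued symbol whose twisted-homogeneous principal part is $\mathcal{B}_\wedge$ and that is corrected by successively lower-order terms through asymptotic summation, glues the local boundary parametrices by a partition of unity on $\Y$, and patches the result with the interior parameter-dependent parametrix for $A+\lambda^2$. This yields
\begin{equation*}
\mathcal{B}_\lambda=\begin{bmatrix}P_\lambda\\Q_\lambda\end{bmatrix}:x^{-1}L^2_b(\M;E)\to\begin{array}{c}x^1H^2_e(\M;E)\\ \oplus\\ H^{2-\gen}(\Y;\trb_F)\end{array}
\end{equation*}
with $\mathcal{A}_\lambda\circ\mathcal{B}_\lambda=\Id+R^{(r)}_\lambda$ on $x^{-1}L^2_b(\M;E)$ and $\mathcal{B}_\lambda\circ\mathcal{A}_\lambda=\Id+R^{(l)}_\lambda$ on the product space, where $R^{(r)}_\lambda$ and $R^{(l)}_\lambda$ are parameter-dependent operators of negative order. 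Since such operators have operator norm tending to $0$ as $|\lambda|\to\infty$, for all sufficiently large $|\lambda|$ the operators $\Id+R^{(r)}_\lambda$ and $\Id+R^{(l)}_\lambda$ are invertible by Neumann series, and therefore $\mathcal{A}_\lambda$ is invertible.

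The main obstacle will be the second and third paragraphs taken together: one must verify that inverting the normal family $\mathcal{A}_\wedge(\pmb\eta,\lambda)$ through the cone-operator Friedrichs theory really yields an operator-valued symbol of the correct order and twisted homogeneity, so that the parameter-dependent edge calculus of \cite{KrMe12b,Kr15} applies to it and the asymptotic summation can be carried out, and that the block structure of the parametrix---with $P_\lambda$ mapping into $x^1H^2_e(\M;E)=\Dom_{\min}(A)$ and $Q_\lambda$ into $H^{2-\gen}(\Y;\trb_F)$---is preserved under the corrections and under the matching of interior and boundary parametrices. Once the calculus supplies the parameter-dependent norm bounds for the remainders, the Neumann-series conclusion is routine.
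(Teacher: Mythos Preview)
Your proposal is correct and follows essentially the same approach as the paper: you invoke parameter-ellipticity of $\wsym(A)+\lambda^2$, factor the normal family through $\Dom_{\wedge,F}$ via the Gil--Mendoza description to get its invertibility, quantize the inverse in the edge calculus of \cite{KrMe12b,Kr15}, and absorb the remainders by a Neumann series for large $|\lambda|$. The only cosmetic difference is that the paper states the remainders are rapidly decreasing in $\lambda$ (i.e., in ${\mathscr S}(\R,\L(\cdot))$) rather than merely of negative order, but either formulation suffices for the conclusion.
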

\begin{proof}
By the conditions (\ref{SymmetricSemibounded})-(\ref{AhatInvertible}), the parameter-dependent homogenous $w$-principal symbol
$\wsym(A)(\pmb\xi) + \lambda^2$ of the operator family $A + \lambda^2$ is invertible on $\bigl(\wT^*\M\times\R\bigr)\minus 0$, see \eqref{wsymparamell}. Moreover, the parameter-dependent normal family
$$
\begin{bmatrix} A_\wedge(\pmb\eta) + \lambda^2 & (A_\wedge(\pmb\eta) + \lambda^2)\circ\Ext_\wedge(\pmb\eta,\lambda) \end{bmatrix} : \begin{array}{c} \Dom_{\wedge,\min} \\ \oplus \\ \trb_{F,y} \end{array} \to x^{-1}L^2_b(\Z^\wedge_y,E_{\Z_y^\wedge})
$$
is invertible on $\bigl(T^*\Y\times\R\bigr)\minus 0$. The latter follows because the normal family of Friedrichs extensions
$$
A_\wedge(\pmb\eta) + \lambda^2 : \Dom_{\wedge,F} \to x^{-1}L^2_b(\Z^\wedge_y,E_{\Z_y^\wedge})
$$
is invertible on $\bigl(T^*\Y\times\R\bigr)\minus 0$ by \eqref{normalfamparamell}, and because we have
$$
\begin{bmatrix} A_\wedge(\pmb\eta) + \lambda^2 & (A_\wedge(\pmb\eta) + \lambda^2)\circ\Ext_\wedge(\pmb\eta,\lambda) \end{bmatrix} = \bigl(A_\wedge(\pmb\eta) + \lambda^2\bigr)\circ\begin{bmatrix} \iota & \Ext_\wedge(\pmb\eta,\lambda) \end{bmatrix},
$$
where
$$
\begin{bmatrix} \iota & \Ext_\wedge(\pmb\eta,\lambda) \end{bmatrix} : \begin{array}{c} \Dom_{\wedge,\min} \\ \oplus \\ \trb_{F,y} \end{array} \to \Dom_{\wedge,F}
$$
is bijective on $\bigl(T^*\Y\times\R\bigr)\minus 0$. The latter isomorphism, as was discussed in Section~\ref{sec-SetupAssumptions}, is a consequence of the main result of \cite{GiMe01} characterizing the domain of the Friedrichs extension for elliptic cone operators. These observations, combined with the assumption on the indicial roots of $A$ in (\ref{b-strips}) of Section~\ref{sec-SetupAssumptions} and the careful structural analysis of the operator family $(A+\lambda^2)\circ\Ext_{\lambda}$ carried out in the proofs of Lemmas~\ref{ExtBasicMapProps} and \ref{ExtAdvMapProps}, enable us to construct a parameter-dependent parametrix
\begin{equation}\label{paramopparam}
\begin{bmatrix}
P(\lambda) \\ T(\lambda)
\end{bmatrix} : x^{-1}L^2_b(\M;E) \to \begin{array}{c} x^1H^2_e(\M;E) \\ \oplus \\ H^{2-\gen}(\Y;\trb_F) \end{array} \end{equation}
to the operator family \eqref{paramopfam}, utilizing (local) symbolic inversion, quantization, patching, and a formal Neumann series argument along the lines of the methods developed in Schulze's parameter-dependent edge calculus (see \cite[Chapters 2 and 3]{Dorschfeldt}, \cite[Chapter 3]{SchuNH}, or \cite[Chapter 3]{SchuWiley}; the reader may also consult \cite[Section 6]{GiKrMe10}). 

There are some minor differences of a merely technical nature in our situation when compared to those references pertaining to the boundary symbolic and operator calculus. Namely, in said references only standard Sobolev spaces of sections appear on $\Y$. If the reader desires to reduce to that situation, the operator family \eqref{paramopfam} must be composed from the right (over $\Y$) with an invertible family of parameter-dependent operators $R(\lambda)$ belonging to the parameter-dependent version of the calculus of \cite{KrMe12b}. This yields an operator family
\begin{equation}\label{opfamparam1}
\begin{bmatrix} A + \lambda^2 & K(\lambda) \end{bmatrix} :
\begin{array}{c} x^1H^2_e(\M;E) \\ \oplus \\ H^{2}(\Y;\trb_F) \end{array} \to x^{-1}L^2_b(\M;E),
\end{equation}
where $K(\lambda) = (A+\lambda^2)\circ\Ext_{\lambda}\circ R(\lambda)$. The operator family $K(\lambda)$, and along with it the full operator matrix \eqref{opfamparam1}, then almost belongs to the class discussed in the referenced literature, but is locally near the boundary based on operator-valued symbols of class $1,\delta$ for some $\delta > 0$, whereas in Schulze's calculus only type $1,0$ symbols are considered. That this modification does not cause any changes to the results follows from \cite{Kr15}.

Alternatively, the reader may from the very beginning consider a generalized parameter-dependent boundary symbolic and operator calculus by allowing group actions generated by bundle endomorphisms on the bundles over $\Y$, and incorporate those actions into symbol estimates of type $1,\delta$ in the form stated in \eqref{bdrysymbest}. All results of Schulze's calculus generalize to this wider class. That this is indeed the case only requires the pseudodifferential calculi introduced in \cite{KrMe12b,Kr15}. In particular, the wider class of parameter-dependent edge pseudodifferential operators then contains the operator family \eqref{paramopfam} and its parametrix \eqref{paramopparam} directly.

The parametrix \eqref{paramopparam} now yields an inverse of \eqref{paramopfam} modulo operator families that are rapidly decreasing in the parameter $\lambda \in \R$, i.e.,
\begin{align*}
\begin{bmatrix} A + \lambda^2 & (A+\lambda^2)\circ\Ext_{\lambda} \end{bmatrix} \circ
\begin{bmatrix} P(\lambda) \\ T(\lambda) \end{bmatrix} - \Id &\in {\mathscr S}(\R,\L(x^{-1}L^2_b(\M;E))), \\
\begin{bmatrix} P(\lambda) \\ T(\lambda) \end{bmatrix} \circ
\begin{bmatrix} A + \lambda^2 & (A+\lambda^2)\circ\Ext_{\lambda} \end{bmatrix}
- \begin{bmatrix} \Id & 0 \\ 0 & \Id \end{bmatrix} &\in {\mathscr S}\left(\R,\L\left(\begin{array}{c} x^1H^2_e(\M;E) \\ \oplus \\ H^{2-\gen}(\Y;\trb_F) \end{array}\right)\right).
\end{align*}
Consequently, the invertibility of \eqref{paramopfam} for large $|\lambda|$ follows, and the proposition is proved.
\end{proof}


\end{document}